\newtheorem{definition}{Definition}[section]%definition labelled with section
\newtheorem{theorem}{Theorem}[section]
\newtheorem{thm}{Theorem}[section]
\newtheorem{cor}[thm]{Corollary}
\numberwithin{equation}{section}
\journal{arXiv.org}
\begin{document}
\begin{frontmatter}
%% Title, authors and addresses
%% use the tnoteref command within \title for footnotes;
%% use the tnotetext command for the associated footnote;
%% use the fnref command within \author or \address for footnotes;
%% use the fntext command for the associated footnote;
%% use the corref command within \author for corresponding author footnotes;
%% use the cortext command for the associated footnote;
%% use the ead command for the email address,
%% and the form \ead[url] for the home page:
%%
%% \title{Title\tnoteref{label1}}
%% \tnotetext[label1]{}
%% \author{Name\corref{cor1}\fnref{label2}}
%% \ead{email address}
%% \ead[url]{home page}
%% \fntext[label2]{}
%% \cortext[cor1]{}
%% \address{Address\fnref{label3}}
%% \fntext[label3]{}
%YIN
\title{A Fast Algorithm for Computing the Fourier Spectrum of a Fractional Period}
%% use optional labels to link authors explicitly to addresses:
%% \author[label1,label2]{<author name>}
%% \address[label1]{<address>}
%% \address[label2]{<address>}
\author{Jiasong Wang$^{1}$, Changchuan Yin$^{2,\ast}$}
\address{1.Department of Mathematics, Nanjing University, Nanjing, Jiangsu 210093, China\\
2.Department of Mathematics, Statistics and Computer Science\\ The University of Illinois at Chicago, Chicago, IL 60607-7045, USA\\
$\ast$ Corresponding author, Email: cyin1@uic.edu
}
\begin{abstract}
The Fourier spectrum at a fractional period is often examined when extracting features from biological sequences and time series. It reflects the inner information structure of the sequences. A fractional period is not uncommon in time series. A typical example is the 3.6 period in protein sequences, which determines the $\alpha$-helix secondary structure. Computing the spectrum of a fractional period offers a high-resolution insight into a time series. It has thus become an important approach in genomic analysis. However, computing Fourier spectra of fractional periods by the traditional Fourier transform is computationally expensive. In this paper, we present a novel, fast algorithm for directly computing the fractional period spectrum (FPS) of time series. The algorithm is based on the periodic distribution of signal strength at periodic positions of the time series. We provide theoretical analysis, deduction, and special techniques for reducing the computational costs of the algorithm. The analysis of the computational complexity of the algorithm shows that the algorithm is much faster than traditional Fourier transform. Our algorithm can be applied directly in computing fractional periods in time series from a broad of research fields. The computer programs of the FPS algorithm are available at https://github.com/cyinbox/FPS.
\end{abstract}
\begin{keyword}
Fourier transform \sep periodicity \sep fractional period spectrum \sep DNA sequences \sep protein sequences
\end{keyword}
\end{frontmatter}
%%%% Start %%%%%%
\section{Introduction}
\label{sec1} 
Signal processing approaches have been widely applied in the periodicity analysis of time series, and thus becoming a major research tool for bioinformatics \citep{anastassiou2001genomic,chen2003will}. The main requisite in applying signal processing to symbolic sequences is mapping the sequences onto numerical time series. For example, we previously presented a numerical representation of DNA sequences without degeneracy \citep{yau2003dna}. After numerical mapping, mathematical methods can be employed to study features, structures, and functions of the symbolic sequences. The most common signal processing approach is Fourier transform \citep{welch1967use}, which has been widely used to study periodicity and repetitive regions in symbolic DNA sequences \citep{silverman1986measure,anastassiou2001genomic}, as well as in genome comparison \citep{yin2014measure,yin2015improved}. We have studied the mathematical properties of the Fourier spectrum for symbolic sequences \citep{wang2012Some,wang2014SNR}. For a review of mathematical methods for the study of biological sequences, one may refer to our book \citep{wang2013numerical}.

Periods in biological sequences can be categorized into two types: integer periods and fractional periods. For integer periods, the 3-base periodicity of protein-coding regions is often used in gene finding \citep{tiwari1997prediction,yin2005fourier,yin2007prediction,yin2014representation}; 2-base periodicity was found in introns of genomes \citep{arques1987periodicities}. 2-period exists in protein sequence regions for beta-sheet structures. As indicated by their name, fractional periods do not correspond to an integer number of cycles in numerical series. Fractional periods are prevalent and important in structures and functions of protein sequences and genomes. For example, the 3.6-period in protein sequences determines the $\alpha$-helix secondary structure \citep{EISENBERG1984,gruber2005repper,leonov2005periodicity,yin2017coevolution}. Strong 10.4- or 10.5-base periodicity in genomes are associated with nucleosomes \citep{trifonov19983,salih2015strong}. The 6.5-base periodicity is present in \textit{C. elegans} introns \citep{messaoudi2013detection}. Fractional period spectrum may offer high resolution and precise features of sequences. However, identifying fractional period features in a large genome is challenging. The demand of the periodicity research of DNA and protein sequences motivates us to investigate advanced techniques for fractional periodicity analysis of time series, which correspond to biological symbolic sequences.

With the quickly growing volume of genomic sequences of human and model organisms, there is a strong demand to characterize not only integer periods but also fractional periods in genomic and protein sequences. We previously proposed a method, periodic power spectrum (PPS), which can directly compute Fourier power spectrum based on periodic distributions of signal strength on periodic positions \citep {wang2012Some,yin2016periodic}. The main advantage the PPS method is that it can avoid spectral leakage and reduce background noise, which both appear in power spectrum in Fourier transform. Thus, the PPS method can capture all latent integer periodicities in DNA sequences. We have applied this method in detection of latent periodicities in different genome elements, including exons and microsatellite DNA sequences. 

This paper extends the PPS method to suggest a novel method for fast and directly calculating Fourier spectrum of a fractional period. This method will be also helpful in directly computing the fractional period spectrum for any time series from science and technology fields outside of bioinformatics. 

\section{Methods Deduced}
\subsection{Fourier transform and congruence derivative sequence}
For a numerical sequence $x$ of length $m$, consisting of $x_0 ,x_1 , \cdots ,x_{m - 1}$, its discrete Fourier transform (DFT) at frequency $k$ is defined as \citep{welch1967use}
\begin{equation}
% MathType!MTEF!2!1!+-
% feaafiart1ev1aqatCvAUfeBSjuyZL2yd9gzLbvyNv2CaerbuLwBLn
% hiov2DGi1BTfMBaeXatLxBI9gBaerbd9wDYLwzYbItLDharqqtubsr
% 4rNCHbWexLMBbXgBd9gzLbvyNv2CaeHbl7mZLdGeaGqiVCI8FfYJH8
% YrFfeuY-Hhbbf9v8qqaqFr0xc9pk0xbba9q8WqFfeaY-biLkVcLq-J
% Hqpepeea0-as0Fb9pgeaYRXxe9vr0-vr0-vqpWqaaeaabiGaciaaca
% qabeaadaqaaqaafaGcbaGaamiwaiaacIcacaWGRbGaaiykaiabg2da
% 9maaqahabaGaamiEamaaBaaaleaacaWGQbaabeaakiaadwgadaahaa
% WcbeqaaiabgkHiTiaadMgacaaIYaGaeqiWdaNaam4AaiaadQgacaGG
% VaGaamyBaaaaaeaacaWGQbGaeyypa0JaaGimaaqaaiaad2gacqGHsi
% slcaaIXaaaniabggHiLdGccaGGSaGaamyAaiabg2da9maakaaabaGa
% eyOeI0IaaGymaiaacYcaaSqabaGccaaMi8UaaGjcVlaayIW7caWGRb
% Gaeyypa0JaaGimaiaacYcacaaIXaGaaiilaiablAciljaacYcacaWG
% TbGaeyOeI0IaaGymaaaa!68E0!
X(k) = \sum\limits_{j = 0}^{m - 1} {x_j e^{ - i2\pi kj/m} } ,i = \sqrt { - 1,} {\kern 1pt} {\kern 1pt} {\kern 1pt} k = 0,1, \ldots ,m - 1
 \end{equation}
 
And its Fourier power spectrum at frequency $k$ is defined as \citep{welch1967use}
 \begin{equation}
% MathType!MTEF!2!1!+-
% feaafiart1ev1aqatCvAUfeBSjuyZL2yd9gzLbvyNv2CaerbuLwBLn
% hiov2DGi1BTfMBaeXatLxBI9gBaerbd9wDYLwzYbItLDharqqtubsr
% 4rNCHbWexLMBbXgBd9gzLbvyNv2CaeHbl7mZLdGeaGqiVCI8FfYJH8
% YrFfeuY-Hhbbf9v8qqaqFr0xc9pk0xbba9q8WqFfeaY-biLkVcLq-J
% Hqpepeea0-as0Fb9pgeaYRXxe9vr0-vr0-vqpWqaaeaabiGaciaaca
% qabeaadaqaaqaafaGceaqabeaacaWGqbGaam4uaiaacIcacaWGRbGa
% aiykaiabg2da9iaadIfadaahaaWcbeqaaiaacQcaaaGccaGGOaGaam
% 4AaiaacMcacaWGybGaaiikaiaadUgacaGGPaaabaGaeyypa0Jaaiik
% amaaqahabaGaamiEamaaBaaaleaacaWGQbaabeaakiaadwgadaahaa
% WcbeqaaiabgkHiTiaadMgacaaIYaGaeqiWdaNaamOAaiaadUgacaGG
% VaGaamyBaaaaaeaacaWGQbGaeyypa0JaaGimaaqaaiaad2gacqGHsi
% slcaaIXaaaniabggHiLdGccaGGPaWaaWbaaSqabeaacaGGQaaaaOGa
% aiikamaaqahabaGaamiEamaaBaaaleaacaWGQbaabeaakiaadwgada
% ahaaWcbeqaaiabgkHiTiaadMgacaaIYaGaeqiWdaNaamOAaiaadUga
% caGGVaGaamyBaaaaaeaacaWGQbGaeyypa0JaaGimaaqaaiaad2gacq
% GHsislcaaIXaaaniabggHiLdGccaGGPaaabaGaamyAaiabg2da9maa
% kaaabaGaeyOeI0IaaGymaiaacYcaaSqabaGccaWGRbGaeyypa0JaaG
% imaiaacYcacaaIXaGaaiilaiablAciljaacYcacaWGTbGaeyOeI0Ia
% aGymaaaaaa!82BE!
\begin{gathered}
  PS(k) = X^* (k)X(k) \hfill \\
   = (\sum\limits_{j = 0}^{m - 1} {x_j e^{ - i2\pi jk/m} } )^* (\sum\limits_{j = 0}^{m - 1} {x_j e^{ - i2\pi jk/m} } ) \hfill \\
  i = \sqrt { - 1,} k = 0,1, \ldots ,m - 1 \hfill \\ 
\end{gathered} 
\end{equation}
where $*$ indicates the complex conjugate. Because our method is applied to researching the periodicity of sequence by using Fourier spectrum, for convenience, the frequency $k/m$ is called period $m/k$. In this paper, we directly define the DFT spectrum by period instead of frequency.
 
We want to compute Fourier spectrum at period $l/k$, $l > k$ and $l$ ,$k$ are smaller than $m$. Using traditional formula of Fourier transform, we need to calculate Fourier transform 
 
 \begin{equation}
 % MathType!MTEF!2!1!+-
 % feaafiart1ev1aqatCvAUfeBSjuyZL2yd9gzLbvyNv2CaerbuLwBLn
 % hiov2DGi1BTfMBaeXatLxBI9gBaerbd9wDYLwzYbItLDharqqtubsr
 % 4rNCHbWexLMBbXgBd9gzLbvyNv2CaeHbl7mZLdGeaGqiVCI8FfYJH8
 % YrFfeuY-Hhbbf9v8qqaqFr0xc9pk0xbba9q8WqFfeaY-biLkVcLq-J
 % Hqpepeea0-as0Fb9pgeaYRXxe9vr0-vr0-vqpWqaaeaabiGaciaaca
 % qabeaadaqaaqaafaGcbaGaamiwaiaacIcacaWGRbGaaiykaiabg2da
 % 9maaqahabaGaamiEamaaBaaaleaacaWGQbaabeaakiaadwgadaahaa
 % WcbeqaamaalaaabaGaeyOeI0IaamyAaiaaikdacqaHapaCcaWGQbGa
 % am4AaaqaaiaadYgaaaaaaaqaaiaadQgacqGH9aqpcaaIWaaabaGaam
 % yBaiabgkHiTiaaigdaa0GaeyyeIuoaaaa!5527!
 X(k) = \sum\limits_{j = 0}^{m - 1} {x_j e^{\frac{{ - i2\pi jk}}
 {l}} }
 \end{equation}
 
Then, its spectrum at period $l/k$ or at frequency $k$ is
 \begin{equation}
% MathType!MTEF!2!1!+-
% feaafiart1ev1aaatCvAUfeBSjuyZL2yd9gzLbvyNv2CaerbuLwBLn
% hiov2DGi1BTfMBaeXatLxBI9gBaerbd9wDYLwzYbItLDharqqtubsr
% 4rNCHbWexLMBbXgBd9gzLbvyNv2CaeHbl7mZLdGeaGqiVu0Je9sqqr
% pepC0xbbL8F4rqqrFfpeea0xe9Lq-Jc9vqaqpepm0xbba9pwe9Q8fs
% 0-yqaqpepae9pg0FirpepeKkFr0xfr-xfr-xb9adbaqaaeGaciGaai
% aabeqaamaabaabauaakeaacaWGqbGaam4uaiaacIcacaWGRbGaaiyk
% aiabg2da9iaacIcadaaeWbqaaiaadIhadaWgaaWcbaGaamOAaaqaba
% GccaWGLbWaaWbaaSqabeaadaWcaaqaaiabgkHiTiaadMgacaaIYaGa
% eqiWdaNaamOAaiaadUgaaeaacaWGSbaaaaaakiaacMcadaahaaWcbe
% qaaiaacQcaaaaabaGaamOAaiabg2da9iaaicdaaeaacaWGTbGaeyOe
% I0IaaGymaaqdcqGHris5aOGaaiikamaaqahabaGaamiEamaaBaaale
% aacaWGQbaabeaakiaadwgadaahaaWcbeqaamaalaaabaGaeyOeI0Ia
% amyAaiaaikdacqaHapaCcaWGQbGaam4AaaqaaiaadYgaaaaaaaqaai
% aadQgacqGH9aqpcaaIWaaabaGaamyBaiabgkHiTiaaigdaa0Gaeyye
% IuoakiaacMcaaaa!6B99!
%\[
PS(k) = (\sum\limits_{j = 0}^{m - 1} {x_j e^{\frac{{ - i2\pi jk}}
{l}} )^* } (\sum\limits_{j = 0}^{m - 1} {x_j e^{\frac{{ - i2\pi jk}}
{l}} } )
%\]
\end{equation}
We will introduce a fast method for directly computing the Fourier spectrum at period $l/k$. We previously demonstrate that the Fourier power spectrum of a numerical sequence is determined by the distribution of signal strength at periodic positions in this sequence \citep{wang2012Some,yin2016periodic}. The distribution can be represented and measured by the congruence derivative sequence of the original sequence, which is defined as follows.

 \begin{definition} For a real number sequence $x$ of length $m$, if two positive integers $n$ and $l$ satisfy $m = nl$, then the congruence derivative sequence, $y$, of $x$ has a length of $l$ and its element is defined as follows \\
 	 \begin{equation}
 	% MathType!MTEF!2!1!+-
 	% feaafiart1ev1aqatCvAUfeBSjuyZL2yd9gzLbvyNv2CaerbuLwBLn
 	% hiov2DGi1BTfMBaeXatLxBI9gBaerbd9wDYLwzYbItLDharqqtubsr
 	% 4rNCHbGeaGqiVCI8FfYJH8YrFfeuY-Hhbbf9v8qqaqFr0xc9pk0xbb
 	% a9q8WqFfeaY-biLkVcLq-JHqpepeea0-as0Fb9pgeaYRXxe9vr0-vr
 	% 0-vqpWqaaeaabiGaciaacaqabeaadaqaaqaaaOqaaiaadMhadaWgaa
 	% WcbaGaamiDaaqabaGccqGH9aqpdaaeWbqaaiaadIhadaWgaaWcbaGa
 	% amOAaiaadYgacqGHRaWkcaWG0baabeaaaeaacaWGQbGaeyypa0JaaG
 	% imaaqaaiaad6gacqGHsislcaaIXaaaniabggHiLdGccaGGSaGaamiD
 	% aiabg2da9iaaicdacaGGSaGaaGymaiaacYcacqWIMaYscaWGSbGaey
 	% OeI0IaaGymaaaa!4EC4!
 	y_t  = \sum\limits_{j = 0}^{n - 1} {x_{jl + t} } ,t = 0,1, \ldots l - 1
 	 \end{equation}
 \end{definition}
 
 The following deduced theorems indicate that the congruence derivative sequence of length $l$ of the numerical sequence $x$ can be used to compute periodic power spectrum at period $l$. The Fourier transform of the congruence derivative sequence $y$ is 
 \begin{equation}
% MathType!MTEF!2!1!+-
% feaafiart1ev1aqatCvAUfeBSjuyZL2yd9gzLbvyNv2CaerbuLwBLn
% hiov2DGi1BTfMBaeXatLxBI9gBaerbd9wDYLwzYbItLDharqqtubsr
% 4rNCHbWexLMBbXgBd9gzLbvyNv2CaeHbl7mZLdGeaGqiVCI8FfYJH8
% YrFfeuY-Hhbbf9v8qqaqFr0xc9pk0xbba9q8WqFfeaY-biLkVcLq-J
% Hqpepeea0-as0Fb9pgeaYRXxe9vr0-vr0-vqpWqaaeaabiGaciaaca
% qabeaadaqaaqaafaGcbaGaamywaiaacIcacaWGRbGaaiykaiabg2da
% 9maaqahabaGaamyEamaaBaaaleaacaWG0baabeaaaeaacaWG0bGaey
% ypa0JaaGimaaqaaiaadYgacqGHsislcaaIXaaaniabggHiLdGccaWG
% LbWaaWbaaSqabeaacqGHsislcaWGPbGaaGOmaiabec8aWjaadshaca
% WGRbGaai4laiaadYgaaaGccaGGSaGaam4Aaiabg2da9iaaicdacaGG
% SaGaaGymaiaacYcacqWIMaYscaWGSbGaeyOeI0IaaGymaaaa!5F29!
Y(k) = \sum\limits_{t = 0}^{l - 1} {y_t } e^{ - i2\pi tk/l} ,k = 0,1, \ldots l - 1
\end{equation}

The relationship between Fourier transform of the original sequence and its congruence derivative sequence was introduced in \citep{wang2012Some,yin2016periodic}. The DFT of sequence $x$ at frequency $n = m/l$ is the same as the DFT of the congruence derivative sequence $y$ at frequency $l$.  
\begin{equation}
% MathType!MTEF!2!1!+-
% feaafiart1ev1aqatCvAUfeBSjuyZL2yd9gzLbvyNv2CaerbuLwBLn
% hiov2DGi1BTfMBaeXatLxBI9gBaerbd9wDYLwzYbItLDharqqtubsr
% 4rNCHbWexLMBbXgBd9gzLbvyNv2CaeHbl7mZLdGeaGqiVCI8FfYJH8
% YrFfeuY-Hhbbf9v8qqaqFr0xc9pk0xbba9q8WqFfeaY-biLkVcLq-J
% Hqpepeea0-as0Fb9pgeaYRXxe9vr0-vr0-vqpWqaaeaabiGaciaaca
% qabeaadaqaaqaafaGcbaGaamiwaiaacIcacaWGUbGaaiykaiabg2da
% 9iaadIfacaGGOaGaamyBaiaac+cacaWGSbGaaiykaiabg2da9iaadM
% facaGGOaGaaGymaiaacMcaaaa!4C01!
X(n) = X(m/l) = Y(1)
\end{equation}
This formula is the mathematical ground of the PPS method. The following theorem extends the conclusion to a general case.
  \begin{theorem}
  	For a real number sequence $x$ of length $m$, suppose $n = m/l$, then the DFT of the numerical sequence $x$ at frequency $kn$ is 
  \begin{equation}
  	% MathType!MTEF!2!1!+-
  	% feaafiart1ev1aqatCvAUfeBSjuyZL2yd9gzLbvyNv2CaerbuLwBLn
  	% hiov2DGi1BTfMBaeXatLxBI9gBaerbd9wDYLwzYbItLDharqqtubsr
  	% 4rNCHbGeaGqiVCI8FfYJH8YrFfeuY-Hhbbf9v8qqaqFr0xc9pk0xbb
  	% a9q8WqFfeaY-biLkVcLq-JHqpepeea0-as0Fb9pgeaYRXxe9vr0-vr
  	% 0-vqpWqaaeaabiGaciaacaqabeaadaqaaqaaaOqaaiaadIfacaGGOa
  	% Gaam4Aaiaad6gacaGGPaGaeyypa0JaamywaiaacIcacaWGRbGaaiyk
  	% aiaacYcacaaIWaGaeyizImQaam4AaiabgsMiJkaadYgacqGHsislca
  	% aIXaaaaa!4686!
  	X(kn) = Y(k),0 \le k \le l - 1
  \end{equation}
 \end{theorem}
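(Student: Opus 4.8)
The plan is to substitute the definition of the DFT directly and exploit the fact that $m=nl$ is an exact integer factorization to collapse the sum. First I would write
\begin{equation}
X(kn) = \sum_{j=0}^{m-1} x_j e^{-i2\pi (kn)j/m},
\end{equation}
and observe that since $m=nl$ the exponent becomes $-i2\pi (kn)j/(nl) = -i2\pi kj/l$; in other words the frequency $kn$ measured against length $m$ is literally the frequency $k$ measured against length $l$. This is the conceptual heart of the statement and the reason the factor $n$ appears on the left.

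Next I would re-index the single sum over $j\in\{0,1,\ldots,m-1\}$ by Euclidean division, writing $j = pl+t$ with $0\le p\le n-1$ and $0\le t\le l-1$. Because $m=nl$, the map $(p,t)\mapsto pl+t$ is a bijection onto $\{0,1,\ldots,m-1\}$, so no terms are lost or double-counted. Under this substitution the exponential factors as $e^{-i2\pi k(pl+t)/l} = e^{-i2\pi kp}\,e^{-i2\pi kt/l} = e^{-i2\pi kt/l}$, where $e^{-i2\pi kp}=1$ because $kp$ is an integer. Hence the double sum separates into
\begin{equation}
X(kn) = \sum_{t=0}^{l-1}\Bigl(\sum_{p=0}^{n-1} x_{pl+t}\Bigr) e^{-i2\pi kt/l},
\end{equation}
and the inner sum is exactly $y_t$ by the definition of the congruence derivative sequence, so $X(kn) = \sum_{t=0}^{l-1} y_t e^{-i2\pi kt/l} = Y(k)$.

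There is no substantive obstacle here; the proof is a one-line index manipulation. The only points that require care are (i) checking that $j\mapsto(p,t)$ is a genuine bijection, which is precisely where the hypothesis $n=m/l$ must be read as an integer factorization rather than a mere ratio, and (ii) the periodicity identity $e^{-i2\pi kp}=1$ for integers $k,p$, which is what makes the inner sum over $p$ independent of the residue $t$ and thus produces the congruence derivative sequence. I would also remark that $k$ is taken to be an integer with $0\le k\le l-1$ so that $Y(k)$ stays in its natural domain, and note that $k=1$ recovers the previously stated special case $X(n)=Y(1)$, while $k=0$ gives the trivial check $X(0)=Y(0)=\sum_t y_t=\sum_j x_j$.
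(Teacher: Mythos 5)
Your proof is correct and is essentially the paper's own argument run in the opposite direction: the paper starts from $Y(k)$, inserts the factor $e^{-i2\pi jlk/l}=1$, and re-indexes $(j,t)\mapsto jl+t$ to arrive at $X(kn)$, while you start from $X(kn)$ and split the index by Euclidean division to arrive at $Y(k)$. The two key ingredients — the bijection $j=pl+t$ and the periodicity $e^{-i2\pi kp}=1$ — are identical, so this is the same proof.
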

  
  \begin{proof}
  We know \\
  % MathType!MTEF!2!1!+-
  % feaafiart1ev1aaatCvAUfeBSjuyZL2yd9gzLbvyNv2CaerbuLwBLn
  % hiov2DGi1BTfMBaeXatLxBI9gBaerbd9wDYLwzYbItLDharqqtubsr
  % 4rNCHbWexLMBbXgBd9gzLbvyNv2CaeHbl7mZLdGeaGqiVCI8FfYJH8
  % YrFfeuY-Hhbbf9v8qqaqFr0xc9pk0xbba9q8WqFfeaY-biLkVcLq-J
  % Hqpepeea0-as0Fb9pgeaYRXxe9vr0-vr0-vqpWqaaeaabiGaciaaca
  % qabeaadaqaaqaafaGceaqabeaacaWGzbGaaiikaiaadUgacaGGPaGa
  % eyypa0ZaaabCaeaacaWG5bWaaSbaaSqaaiaadshaaeqaaaqaaiaads
  % hacqGH9aqpcaaIWaaabaGaamiBaiabgkHiTiaaigdaa0GaeyyeIuoa
  % kiaadwgadaahaaWcbeqaaiabgkHiTiaadMgacaaIYaGaeqiWdaNaam
  % iDaiaadUgacaGGVaGaamiBaaaakiabg2da9maaqahabaWaaabCaeaa
  % caWG4bWaaSbaaSqaaiaadQgacaWGSbGaey4kaSIaamiDaaqabaaaba
  % GaamOAaiabg2da9iaaicdaaeaacaWGUbGaeyOeI0IaaGymaaqdcqGH
  % ris5aOGaamyzamaaCaaaleqabaGaeyOeI0IaamyAaiaaikdacqaHap
  % aCcaWG0bGaam4Aaiaac+cacaWGSbaaaaqaaiaadshacqGH9aqpcaaI
  % WaaabaGaamiBaiabgkHiTiaaigdaa0GaeyyeIuoakiabg2da9maaqa
  % habaWaaabCaeaacaWG4bWaaSbaaSqaaiaadQgacaWGSbGaey4kaSIa
  % amiDaaqabaaabaGaamOAaiabg2da9iaaicdaaeaacaWGUbGaeyOeI0
  % IaaGymaaqdcqGHris5aOGaamyzamaaCaaaleqabaGaeyOeI0IaamyA
  % aiaaikdacqaHapaCcaGGOaGaamOAaiaadYgacqGHRaWkcaWG0bGaai
  % ykaiaadUgacaGGVaGaamiBaaaaaeaacaWG0bGaeyypa0JaaGimaaqa
  % aiaadYgacqGHsislcaaIXaaaniabggHiLdGccqGH9aqpdaaeWbqaai
  % aadIhadaWgaaWcbaGaamOCaaqabaaabaGaamOCaiabg2da9iaaicda
  % aeaacaWGTbGaeyOeI0IaaGymaaqdcqGHris5aOGaamyzamaaCaaale
  % qabaGaeyOeI0IaamyAaiaaikdacqaHapaCcaWGYbGaam4Aaiaad6ga
  % caGGVaGaamyBaaaaaOqaaiabg2da9iaadIfacaGGOaGaam4Aaiaad6
  % gacaGGPaaaaaa!AFDF!
  %\[
  $\begin{gathered}
    Y(k) = \sum\limits_{t = 0}^{l - 1} {y_t } e^{ - i2\pi tk/l}  = \sum\limits_{t = 0}^{l - 1} {\sum\limits_{j = 0}^{n - 1} {x_{jl + t} } e^{ - i2\pi tk/l} }  = \sum\limits_{t = 0}^{l - 1} {\sum\limits_{j = 0}^{n - 1} {x_{jl + t} } e^{ - i2\pi (jl + t)k/l} }  = \sum\limits_{r = 0}^{m - 1} {x_r } e^{ - i2\pi rkn/m}  \hfill \\
     = X(kn) \hfill \\ 
  \end{gathered}$ 
  %\]
\end{proof}

The proof of Theorem 2.1 indicates that the property of conjugate symmetry in DFT of a real sequence is preserved. We thus have the following corollary.
 \begin{cor}
  	\begin{equation}
  % MathType!MTEF!2!1!+-
  % feaafiart1ev1aqatCvAUfeBSjuyZL2yd9gzLbvyNv2CaerbuLwBLn
  % hiov2DGi1BTfMBaeXatLxBI9gBaerbd9wDYLwzYbItLDharqqtubsr
  % 4rNCHbWexLMBbXgBd9gzLbvyNv2CaeHbl7mZLdGeaGqiVCI8FfYJH8
  % YrFfeuY-Hhbbf9v8qqaqFr0xc9pk0xbba9q8WqFfeaY-biLkVcLq-J
  % Hqpepeea0-as0Fb9pgeaYRXxe9vr0-vr0-vqpWqaaeaabiGaciaaca
  % qabeaadaqaaqaafaGcbaGaamiwaiaacIcacaWGRbGaamOBaiaacMca
  % cqGH9aqpcaWGybWaaWbaaSqabeaacaGGQaaaaOGaaiikaiaacIcaca
  % WGSbGaeyOeI0Iaam4AaiaacMcacaWGUbGaaiykaiaacYcacaaIXaGa
  % eyizImQaam4AaiabgsMiJkaadYgacqGHsislcaaIXaaaaa!54C0!
  X(kn) = X^* ((l - k)n),1 \leqslant k \leqslant l - 1
  \end{equation}
  \end{cor}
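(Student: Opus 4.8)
The plan is to derive the corollary directly from Theorem 2.1 together with the classical conjugate-symmetry property of the DFT of a real-valued sequence. First I would recall that for the real sequence $x$ of length $m$, the standard DFT identity $X(p) = X^*(m-p)$ holds for $1 \le p \le m-1$; this is immediate from the definition in equation (2.1) by conjugating the exponential and using $e^{-i2\pi (m-p)j/m} = e^{i2\pi pj/m}$ since $e^{-i2\pi j}=1$. Applying this with $p = kn$, and noting that $m - kn = nl - kn = (l-k)n$, gives $X(kn) = X^*((l-k)n)$ for the relevant range of $k$.

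Next I would invoke Theorem 2.1 to re-express both sides in terms of the congruence derivative sequence: $X(kn) = Y(k)$ and $X((l-k)n) = Y(l-k)$, valid because $0 \le k \le l-1$ forces $0 \le l-k \le l-1$ as well. Combining, $Y(k) = Y^*(l-k)$ for $1 \le k \le l-1$, which is exactly the stated conjugate symmetry, and substituting back yields equation (2.11). I would restrict to $1 \le k \le l-1$ precisely so that $kn$ and $(l-k)n$ both lie in $\{1,\dots,m-1\}$, where the real-sequence symmetry applies; the endpoint $k=0$ is excluded since it would involve $X(ln)=X(m)=X(0)$, outside the range.

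Since the congruence derivative sequence $y$ is itself real (a finite sum of real terms $x_{jl+t}$), an alternative, self-contained route is to simply apply the real-sequence DFT symmetry directly to $Y$ via equation (2.7), obtaining $Y(k) = Y^*(l-k)$ in one line, and then rewrite via Theorem 2.1. Either way, the argument is short. I do not anticipate a genuine obstacle here; the only point requiring a moment of care is the index bookkeeping — verifying that $m-kn = (l-k)n$ and that the claimed range of $k$ keeps all arguments within the valid window for the symmetry relation. I would present the proof in the two-line form: cite the real-sequence conjugate symmetry for $Y$, then apply Theorem 2.1 to both $Y(k)$ and $Y(l-k)$.
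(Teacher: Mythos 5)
Your proposal is correct and matches the paper's (very brief) justification: the paper simply notes that conjugate symmetry of the DFT of the real sequence is preserved through Theorem 2.1, which is exactly your two-line argument via $Y(k)=Y^*(l-k)$. Your first route, applying the symmetry to $X$ directly and checking $m-kn=(l-k)n$, is an equivalent bookkeeping variant, not a genuinely different method.
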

  
  According to formula 2.2, Fourier power spectrum of sequence $x$ at frequency $kn$ is defined as
  \begin{equation}
  % MathType!MTEF!2!1!+-
  % feaafiart1ev1aaatCvAUfeBSjuyZL2yd9gzLbvyNv2CaerbuLwBLn
  % hiov2DGi1BTfMBaeXatLxBI9gBaerbd9wDYLwzYbItLDharqqtubsr
  % 4rNCHbWexLMBbXgBd9gzLbvyNv2CaeHbl7mZLdGeaGqiVu0Je9sqqr
  % pepC0xbbL8F4rqqrFfpeea0xe9Lq-Jc9vqaqpepm0xbba9pwe9Q8fs
  % 0-yqaqpepae9pg0FirpepeKkFr0xfr-xfr-xb9adbaqaaeGaciGaai
  % aabeqaamaabaabauaakeaacaWGqbGaam4uaiaacIcacaWGRbGaamOB
  % aiaacMcacqGH9aqpcaWGybWaaWbaaSqabeaacaGGQaaaaOGaaiikai
  % aadUgacaWGUbGaaiykaiaadIfacaGGOaGaam4Aaiaad6gacaGGPaGa
  % eyypa0JaamiwamaaCaaaleqabaGaaiOkaaaakiaacIcacaWGRbGaam
  % yBaiaac+cacaWGSbGaaiykaiaadIfacaGGOaGaam4Aaiaad2gacaGG
  % VaGaamiBaiaacMcacqGH9aqpcaWGzbWaaWbaaSqabeaacaGGQaaaaO
  % GaaiikaiaadUgacaGGPaGaamywaiaacIcacaWGRbGaaiykaaaa!63BA!
  %\[
  PS(kn) = X^* (kn)X(kn) = X^* (km/l)X(km/l) = Y^* (k)Y(k).
  %\]
  \end{equation}
 
 \begin{definition}
If $m = nl$, the DFT spectrum of numerical sequence $x$ at fractional period $l/k $ is named the fractional period spectrum (FPS) and is written as
% MathType!MTEF!2!1!+-
% feaafiart1ev1aqatCvAUfeBSjuyZL2yd9gzLbvyNv2CaerbuLwBLn
% hiov2DGi1BTfMBaeXatLxBI9gBaerbd9wDYLwzYbItLDharqqtubsr
% 4rNCHbGeaGqiVCI8FfYJH8YrFfeuY-Hhbbf9v8qqaqFr0xc9pk0xbb
% a9q8WqFfeaY-biLkVcLq-JHqpepeea0-as0Fb9pgeaYRXxe9vr0-vr
% 0-vqpWqaaeaabiGaciaacaqabeaadaqaaqaaaOqaaiaadAeacaWGqb
% Gaam4uaiaacIcacaWGSbGaai4laiaadUgacaGGPaGaeyypa0Jaamyw
% amaaCaaaleqabaGaaiOkaaaakiaacIcacaWGRbGaaiykaiaadMfaca
% GGOaGaam4AaiaacMcacaGGSaGaaGymaiabgsMiJkaadUgacqGHKjYO
% caWGSbGaeyOeI0IaaGymaaaa!4CDF!
$FPS(l/k) = Y^* (k)Y(k),1 \le k \le l - 1$
\end{definition}

\begin{definition}
For a real number sequence of length $l$, $y_t,t = 0,1, \ldots ,l - 1$, its self $q$-shift summation is defined by 
\begin{equation} 
% MathType!MTEF!2!1!+-
% feaafiart1ev1aaatCvAUfeBSjuyZL2yd9gzLbvyNv2CaerbuLwBLn
% hiov2DGi1BTfMBaeXatLxBI9gBaerbd9wDYLwzYbItLDharqqtubsr
% 4rNCHbGeaGqiVCI8FfYJH8YrFfeuY-Hhbbf9v8qqaqFr0xc9pk0xbb
% a9q8WqFfeaY-biLkVcLq-JHqpepeea0-as0Fb9pgeaYRXxe9vr0-vr
% 0-vqpWqaaeaabiGaciaacaqabeaadaqaaqaaaOqaaiaadQhadaWgaa
% WcbaGaamiBaiaacYcacaWGXbaabeaakiabg2da9maaqahabaGaamyE
% amaaBaaaleaacaWG0baabeaakiaadMhadaWgaaWcbaGaamiDaiabgU
% caRiaadghaaeqaaaqaaiaadshacqGH9aqpcaaIWaaabaGaamiBaiab
% gkHiTiaaigdaa0GaeyyeIuoaaaa!4865!
%\[
z_{l,q}  = \sum\limits_{t = 0}^{l - 1} {y_t y_{t + q} } 
%\]
\end{equation}
with $t+q$ taken modulo $l$, $0\leqslant q < l$.
\end{definition}

If we write sequence $y_t$, $t = 0,1, \ldots ,l - 1$, as a vector, $y^T  = [y_0 ,y_1 , \cdots ,y_{l - 1} ]$ , then $z_{l,q} $ can be realized by the autocorrelation of vector $y$. For a special case, $z_{l,0}$ is equal to the inner product of vector $y$, or $z_{l,0}  = y^Ty$.

From the definition of DFT of the congruence derivative sequence (formula 2.6), we have
\begin{equation}
% MathType!MTEF!2!1!+-
% feaafiart1ev1aaatCvAUfeBSjuyZL2yd9gzLbvyNv2CaerbuLwBLn
% hiov2DGi1BTfMBaeXatLxBI9gBaerbd9wDYLwzYbItLDharqqtubsr
% 4rNCHbGeaGqiVu0Je9sqqrpepC0xbbL8F4rqqrFfpeea0xe9Lq-Jc9
% vqaqpepm0xbba9pwe9Q8fs0-yqaqpepae9pg0FirpepeKkFr0xfr-x
% fr-xb9adbaqaaeGaciGaaiaabeqaamaabaabaaGcbaGaamywaiaacI
% cacaWGRbGaaiykaiabg2da9maaqahabaGaamyEamaaBaaaleaacaWG
% 0baabeaakiaadwgadaahaaWcbeqaaiabgkHiTiaadMgacaaIYaGaeq
% iWdaNaamiDaiaadUgacaGGVaGaamiBaaaakiabg2da9aWcbaGaamiD
% aiabg2da9iaaicdaaeaacaWGSbGaeyOeI0IaaGymaaqdcqGHris5aO
% WaaabCaeaacaWG5bWaaSbaaSqaaiaadshaaeqaaOGaci4yaiaac+ga
% caGGZbGaaiikaiaaikdacqaHapaCcaWG0bGaam4Aaiaac+cacaWGSb
% GaaiykaiabgkHiTiaadMgaaSqaaiaadshacqGH9aqpcaaIWaaabaGa
% amiBaiabgkHiTiaaigdaa0GaeyyeIuoakmaaqahabaGaamyEamaaBa
% aaleaacaWG0baabeaakiGacohacaGGPbGaaiOBaiaacIcacaaIYaGa
% eqiWdaNaamiDaiaadUgacaGGVaGaamiBaiaacMcaaSqaaiaadshacq
% GH9aqpcaaIWaaabaGaamiBaiabgkHiTiaaigdaa0GaeyyeIuoaaaa!77BD!
%\[
Y(k) = \sum\limits_{t = 0}^{l - 1} {y_t e^{ - i2\pi tk/l}  = } \sum\limits_{t = 0}^{l - 1} {y_t \cos (2\pi tk/l) - i} \sum\limits_{t = 0}^{l - 1} {y_t \sin (2\pi tk/l)} 
%\]
\end{equation}

From the definition of $FPS$, $FPS(l/k) = Y^* (k)Y(k)$, we notice that
\begin{equation}
% MathType!MTEF!2!1!+-
% feaafiart1ev1aaatCvAUfeBSjuyZL2yd9gzLbvyNv2CaerbuLwBLn
% hiov2DGi1BTfMBaeXatLxBI9gBaerbd9wDYLwzYbItLDharqqtubsr
% 4rNCHbGeaGqiVu0Je9sqqrpepC0xbbL8F4rqqrFfpeea0xe9Lq-Jc9
% vqaqpepm0xbba9pwe9Q8fs0-yqaqpepae9pg0FirpepeKkFr0xfr-x
% fr-xb9adbaqaaeGaciGaaiaabeqaamaabaabaaGcbaGaamOraiaadc
% facaWGtbGaaiikaiaadYgacaGGVaGaam4AaiaacMcacqGH9aqpcaGG
% OaWaaabCaeaacaWG5bWaaSbaaSqaaiaadshaaeqaaOGaci4yaiaac+
% gacaGGZbGaaiikaiaaikdacqaHapaCcaWG0bGaam4Aaiaac+cacaWG
% SbGaaiykaiaacMcadaahaaWcbeqaaiaaikdaaaGccqGHRaWkcaGGOa
% aaleaacaWG0bGaeyypa0JaaGimaaqaaiaadYgacqGHsislcaaIXaaa
% niabggHiLdGcdaaeWbqaaiaadMhadaWgaaWcbaGaamiDaaqabaGcci
% GGZbGaaiyAaiaac6gacaGGOaGaaGOmaiabec8aWjaadshacaWGRbGa
% ai4laiaadYgacaGGPaaaleaacaWG0bGaeyypa0JaaGimaaqaaiaadY
% gacqGHsislcaaIXaaaniabggHiLdGccaGGPaWaaWbaaSqabeaacaaI
% Yaaaaaaa!6AC7!
%\[
FPS(l/k) = (\sum\limits_{t = 0}^{l - 1} {y_t \cos (2\pi tk/l))^2  + (} \sum\limits_{t = 0}^{l - 1} {y_t \sin (2\pi tk/l)} )^2 
%\]
\end{equation}

which is real quadratic form of $l$ variables, $y_0 ,y_1 , \ldots ,y_{l - 1}$, can be written as 
\begin{equation}
% MathType!MTEF!2!1!+-
% feaafiart1ev1aaatCvAUfeBSjuyZL2yd9gzLbvyNv2CaerbuLwBLn
% hiov2DGi1BTfMBaeXatLxBI9gBaerbd9wDYLwzYbItLDharqqtubsr
% 4rNCHbGeaGqiVu0Je9sqqrpepC0xbbL8F4rqqrFfpeea0xe9Lq-Jc9
% vqaqpepm0xbba9pwe9Q8fs0-yqaqpepae9pg0FirpepeKkFr0xfr-x
% fr-xb9adbaqaaeGaciGaaiaabeqaamaabaabaaGcbaGaamOraiaadc
% facaWGtbGaaiikaiaadYgacaGGVaGaam4AaiaacMcacqGH9aqpcaWG
% 5bWaaWbaaSqabeaacaWG0baaaOGaamyqaiaadMhaaaa!414A!
%\[
FPS(l/k) = y^t Ay
%\]
\end{equation}
where the coefficient matrix $A=a_{rs}$ is of quadratic form, in which
\begin{equation}
% MathType!MTEF!2!1!+-
% feaafiart1ev1aaatCvAUfeBSjuyZL2yd9gzLbvyNv2CaerbuLwBLn
% hiov2DGi1BTfMBaeXatLxBI9gBaerbd9wDYLwzYbItLDharqqtubsr
% 4rNCHbWexLMBbXgBd9gzLbvyNv2CaeHbl7mZLdGeaGqiVu0Je9sqqr
% pepC0xbbL8F4rqqrFfpeea0xe9Lq-Jc9vqaqpepm0xbba9pwe9Q8fs
% 0-yqaqpepae9pg0FirpepeKkFr0xfr-xfr-xb9adbaqaaeGaciGaai
% aabeqaamaabaabauaakqaabeqaaiaadggadaWgaaWcbaGaamOCaiaa
% dohaaeqaaOGaeyypa0Jaci4yaiaac+gacaGGZbGaaiikaiaacIcaca
% WGYbGaeyOeI0IaamiBaiaacMcadaWcaaqaaiaaikdacaWGRbGaeqiW
% dahabaGaamiBaaaacaGGPaGaci4yaiaac+gacaGGZbGaaiikaiaacI
% cacaWGZbGaeyOeI0IaamiBaiaacMcadaWcaaqaaiaaikdacaWGRbGa
% eqiWdahabaGaamiBaaaacaGGPaGaey4kaSIaci4CaiaacMgacaGGUb
% GaaiikaiaacIcacaWGYbGaeyOeI0IaamiBaiaacMcadaWcaaqaaiaa
% ikdacaWGRbGaeqiWdahabaGaamiBaaaacaGGPaGaci4CaiaacMgaca
% GGUbGaaiikaiaacIcacaWGZbGaeyOeI0IaamiBaiaacMcadaWcaaqa
% aiaaikdacaWGRbGaeqiWdahabaGaamiBaaaacaGGPaaabaGaamOCai
% aacYcacaWGZbGaeyypa0JaaGymaiaacYcacaaIYaGaaiilaiablAci
% ljaacYcacaWGSbaaaaa!806D!
%\[
\begin{gathered}
  a_{rs}  = \cos ((r - l)\frac{{2k\pi }}
{l})\cos ((s - l)\frac{{2k\pi }}
{l}) + \sin ((r - l)\frac{{2k\pi }}
{l})\sin ((s - l)\frac{{2k\pi }}
{l}) \hfill \\
  r,s = 1,2, \ldots ,l \hfill \\ 
\end{gathered} 
%\]
\end{equation}

After reorganizing formula (2.15), we obtain the following theorem about the coefficient matrix $A$.
%NEW
   \begin{theorem}  
   The coefficient matrix of the quadratic form, $A$, is a symmetric matrix.
   \end{theorem}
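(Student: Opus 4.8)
The plan is to show directly that $a_{rs}=a_{sr}$ for all $r,s\in\{1,2,\ldots,l\}$, since symmetry of a matrix is exactly this coordinate condition. First I would write down the two expressions $a_{rs}$ and $a_{sr}$ from formula (2.16), the latter obtained simply by interchanging the roles of $r$ and $s$. The key observation is that the defining formula for $a_{rs}$ is built out of the trigonometric expression
\begin{equation}
\cos\!\Bigl((r-l)\tfrac{2k\pi}{l}\Bigr)\cos\!\Bigl((s-l)\tfrac{2k\pi}{l}\Bigr)+\sin\!\Bigl((r-l)\tfrac{2k\pi}{l}\Bigr)\sin\!\Bigl((s-l)\tfrac{2k\pi}{l}\Bigr),
\end{equation}
which is a sum of two products; each product is commutative in its two factors, so swapping $r$ and $s$ maps $\cos(\cdot)\cos(\cdot)$ to itself and $\sin(\cdot)\sin(\cdot)$ to itself. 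Hence $a_{rs}=a_{sr}$ term by term, and $A$ is symmetric.

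An even cleaner route, which I would mention as the ``reorganized'' form hinted at just before the theorem, is to apply the cosine difference identity: the bracketed expression above equals $\cos\!\bigl((r-s)\tfrac{2k\pi}{l}\bigr)$, so
\begin{equation}
a_{rs}=\cos\!\Bigl((r-s)\tfrac{2k\pi}{l}\Bigr).
\end{equation}
Since cosine is an even function, $\cos\!\bigl((r-s)\tfrac{2k\pi}{l}\bigr)=\cos\!\bigl((s-r)\tfrac{2k\pi}{l}\bigr)$, which gives $a_{rs}=a_{sr}$ immediately and as a bonus shows that $A$ is a real symmetric (indeed Toeplitz) matrix whose entries depend only on $r-s$. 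I would also note the structural reason this had to be true: $FPS(l/k)=y^{t}Ay$ is a real scalar, so only the symmetric part of $A$ contributes to the quadratic form, and the construction in (2.16) already produced that symmetric part; moreover $A$ is manifestly positive semidefinite since $FPS(l/k)=|Y(k)|^{2}\ge 0$, though that is not needed for the present claim.

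There is essentially no obstacle here: the only thing to be careful about is bookkeeping with the index shift $r\mapsto r-l$ (equivalently $r$ ranging over $1,\ldots,l$ rather than $0,\ldots,l-1$), and checking that the modular reduction implicit in the congruence derivative indexing does not spoil the symmetry — it does not, because the shift $-l$ is applied symmetrically to both indices. So the proof is a one-line verification via the parity of cosine once the difference identity is invoked; the main ``work'' is simply recording the reorganization of (2.16) into the single-cosine form and observing evenness.
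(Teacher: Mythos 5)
Your proposal is correct and its second route is exactly the paper's proof: apply the cosine difference identity to get $a_{rs}=\cos\bigl((r-s)\tfrac{2k\pi}{l}\bigr)$ and conclude symmetry from the evenness of cosine. Your first observation --- that the defining expression is a sum of two products, each commutative in its factors, so swapping $r$ and $s$ fixes $a_{rs}$ term by term --- is an even more direct valid argument that the paper does not bother to state, and your remarks on the Toeplitz structure and positive semidefiniteness are correct side observations not needed for the claim.
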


 \begin{proof}
   According to some fundamental identities of trigonometry, 
% MathType!MTEF!2!1!+-
% feaafiart1ev1aaatCvAUfeBSjuyZL2yd9gzLbvyNv2CaerbuLwBLn
% hiov2DGi1BTfMBaeXatLxBI9gBaerbd9wDYLwzYbItLDharqqtubsr
% 4rNCHbWexLMBbXgBd9gzLbvyNv2CaeHbl7mZLdGeaGqiVCI8FfYJH8
% YrFfeuY-Hhbbf9v8qqaqFr0xc9pk0xbba9q8WqFfeaY-biLkVcLq-J
% Hqpepeea0-as0Fb9pgeaYRXxe9vr0-vr0-vqpWqaaeaabiGaciaaca
% qabeaadaqaaqaafaGcbaGaamyyamaaBaaaleaacaWGYbGaam4Caaqa
% baGccqGH9aqpciGGJbGaai4BaiaacohacaGGOaGaaiikaiaadkhacq
% GHsislcaWGSbGaaiykamaalaaabaGaaGOmaiaadUgacqaHapaCaeaa
% caWGSbaaaiaacMcaciGGJbGaai4BaiaacohacaGGOaGaaiikaiaado
% hacqGHsislcaWGSbGaaiykamaalaaabaGaaGOmaiaadUgacqaHapaC
% aeaacaWGSbaaaiaacMcacqGHRaWkciGGZbGaaiyAaiaac6gacaGGOa
% GaaiikaiaadkhacqGHsislcaWGSbGaaiykamaalaaabaGaaGOmaiaa
% dUgacqaHapaCaeaacaWGSbaaaiaacMcaciGGZbGaaiyAaiaac6gaca
% GGOaGaaiikaiaadohacqGHsislcaWGSbGaaiykamaalaaabaGaaGOm
% aiaadUgacqaHapaCaeaacaWGSbaaaiaacMcaaaa!771C!
\[
a_{rs}  = \cos ((r - l)\frac{{2k\pi }}
{l})\cos ((s - l)\frac{{2k\pi }}
{l}) + \sin ((r - l)\frac{{2k\pi }}
{l})\sin ((s - l)\frac{{2k\pi }}
{l})
\]
would be equal to 
% MathType!MTEF!2!1!+-
% feaafiart1ev1aaatCvAUfeBSjuyZL2yd9gzLbvyNv2CaerbuLwBLn
% hiov2DGi1BTfMBaeXatLxBI9gBaerbd9wDYLwzYbItLDharqqtubsr
% 4rNCHbWexLMBbXgBd9gzLbvyNv2CaeHbl7mZLdGeaGqiVCI8FfYJH8
% YrFfeuY-Hhbbf9v8qqaqFr0xc9pk0xbba9q8WqFfeaY-biLkVcLq-J
% Hqpepeea0-as0Fb9pgeaYRXxe9vr0-vr0-vqpWqaaeaabiGaciaaca
% qabeaadaqaaqaafaGcbaGaci4yaiaac+gacaGGZbGaaiikaiaacIca
% caWGYbGaeyOeI0IaamiBaiabgkHiTiaacIcacaWGZbGaeyOeI0Iaam
% iBaiaacMcacaGGPaWaaSaaaeaacaaIYaGaam4Aaiabec8aWbqaaiaa
% dYgaaaGaaiykaiabg2da9iGacogacaGGVbGaai4CaiaacIcacaGGOa
% GaamOCaiabgkHiTiaadohacaGGPaWaaSaaaeaacaaIYaGaam4Aaiab
% ec8aWbqaaiaadYgaaaGaaiykaaaa!5EBE!
%\[
$\cos ((r - l - (s - l))\frac{{2k\pi }}
{l}) = \cos ((r - s)\frac{{2k\pi }}
{l})$
%\], 
, \textit{i.e.}, 
% MathType!MTEF!2!1!+-
% feaafiart1ev1aaatCvAUfeBSjuyZL2yd9gzLbvyNv2CaerbuLwBLn
% hiov2DGi1BTfMBaeXatLxBI9gBaerbd9wDYLwzYbItLDharqqtubsr
% 4rNCHbWexLMBbXgBd9gzLbvyNv2CaeHbl7mZLdGeaGqiVCI8FfYJH8
% YrFfeuY-Hhbbf9v8qqaqFr0xc9pk0xbba9q8WqFfeaY-biLkVcLq-J
% Hqpepeea0-as0Fb9pgeaYRXxe9vr0-vr0-vqpWqaaeaabiGaciaaca
% qabeaadaqaaqaafaGcbaGaamyyamaaBaaaleaacaWGYbGaam4Caaqa
% baGccqGH9aqpciGGJbGaai4BaiaacohacaGGOaGaaiikaiaadkhacq
% GHsislcaWGZbGaaiykamaalaaabaGaaGOmaiaadUgacqaHapaCaeaa
% caWGSbaaaiaacMcaaaa!4FE9!
%\[
$a_{rs}  = \cos ((r - s)\frac{{2k\pi }}
{l})$.
%\]

 It is clear that $a_{rs} = a_{sr}$, so the coefficient matrix $A$ is symmetric. 
 \end{proof}
  
 \begin{theorem} For a real number sequence $x$ of length $m$, if $m=nl$ and its congruence derivative sequence is $y_t, t = 0,1, \cdots ,l - 1$, then $FPS(l/k)$ of the sequence $x$ can be expressed as follows.
   
   If $l$ is an odd number,
  % MathType!MTEF!2!1!+-
  % feaafiart1ev1aaatCvAUfeBSjuyZL2yd9gzLbvyNv2CaerbuLwBLn
  % hiov2DGi1BTfMBaeXatLxBI9gBaerbd9wDYLwzYbItLDharqqtubsr
  % 4rNCHbWexLMBbXgBd9gzLbvyNv2CaeHbl7mZLdGeaGqiVCI8FfYJH8
  % YrFfeuY-Hhbbf9v8qqaqFr0xc9pk0xbba9q8WqFfeaY-biLkVcLq-J
  % Hqpepeea0-as0Fb9pgeaYRXxe9vr0-vr0-vqpWqaaeaabiGaciaaca
  % qabeaadaqaaqaafaGcbaGaamOraiaadcfacaWGtbGaaiikaiaadYga
  % caGGVaGaam4AaiaacMcacqGH9aqpcaWG6bWaaSbaaSqaaiaadYgaca
  % GGSaGaaGimaaqabaGccqGHRaWkcaaIYaWaaabCaeaacaWG6bWaaSba
  % aSqaaiaadYgacaGGSaGaamyCaaqabaGcciGGJbGaai4Baiaacohaca
  % GGOaGaaiikaiaadghacaGGPaaaleaacaWGXbGaeyypa0JaaGymaaqa
  % amaalaaabaGaamiBaiabgkHiTiaaigdaaeaacaaIYaaaaaqdcqGHri
  % s5aOWaaSaaaeaacaaIYaGaam4Aaiabec8aWbqaaiaadYgaaaGaaiyk
  % aaaa!62BF!
  \[
  FPS(l/k) = z_{l,0}  + 2\sum\limits_{q = 1}^{\frac{{l - 1}}
  {2}} {z_{l,q} \cos ((q)} \frac{{2k\pi }}
  {l}).
  \]
   
   If $l$ is an even number,
   % MathType!MTEF!2!1!+-
   % feaafiart1ev1aaatCvAUfeBSjuyZL2yd9gzLbvyNv2CaerbuLwBLn
   % hiov2DGi1BTfMBaeXatLxBI9gBaerbd9wDYLwzYbItLDharqqtubsr
   % 4rNCHbWexLMBbXgBd9gzLbvyNv2CaeHbl7mZLdGeaGqiVCI8FfYJH8
   % YrFfeuY-Hhbbf9v8qqaqFr0xc9pk0xbba9q8WqFfeaY-biLkVcLq-J
   % Hqpepeea0-as0Fb9pgeaYRXxe9vr0-vr0-vqpWqaaeaabiGaciaaca
   % qabeaadaqaaqaafaGcbaGaamOraiaadcfacaWGtbGaaiikaiaadYga
   % caGGVaGaam4AaiaacMcacqGH9aqpcaWG6bWaaSbaaSqaaiaadYgaca
   % GGSaGaaGimaaqabaGccqGHRaWkcaaIYaWaaabCaeaacaWG6bWaaSba
   % aSqaaiaadYgacaGGSaGaamyCaaqabaGcciGGJbGaai4Baiaacohaca
   % GGOaGaaiikaiaadghacaGGPaWaaSaaaeaacaaIYaGaam4Aaiabec8a
   % WbqaaiaadYgaaaGaaiykaiabgUcaRaWcbaGaamyCaiabg2da9iaaig
   % daaeaadaWcaaqaaiaadYgaaeaacaaIYaaaaiabgkHiTiaaigdaa0Ga
   % eyyeIuoakiaaikdaciGGJbGaai4BaiaacohacaGGOaGaaiikamaala
   % aabaGaamiBaaqaaiaaikdaaaGaaiykamaalaaabaGaaGOmaiaadUga
   % cqaHapaCaeaacaWGSbaaaiaacMcadaaeWbqaaiaadMhadaWgaaWcba
   % GaamiDaaqabaGccaWG5bWaaSbaaSqaaiaadshacqGHRaWkdaWcaaqa
   % aiaadYgaaeaacaaIYaaaaaqabaaabaGaamiDaiabg2da9iaaicdaae
   % aadaWcaaqaaiaadYgaaeaacaaIYaaaaiabgkHiTiaaigdaa0Gaeyye
   % Iuoaaaa!7F4D!
   \[
   FPS(l/k) = z_{l,0}  + 2\sum\limits_{q = 1}^{\frac{l}
   {2} - 1} {z_{l,q} \cos ((q)\frac{{2k\pi }}
   {l}) + } 2\cos ((\frac{l}
   {2})\frac{{2k\pi }}
   {l})\sum\limits_{t = 0}^{\frac{l}
   {2} - 1} {y_t y_{t + \frac{l}
   {2}} } 
   \]
   \end{theorem}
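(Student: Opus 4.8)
The strategy is to start from the quadratic-form expression $FPS(l/k) = y^{t}Ay$ given in formula (2.15), substitute the simplified entry $a_{rs} = \cos((r-s)\tfrac{2k\pi}{l})$ established in Theorem 2.3, and then reorganize the double sum $\sum_{r,s} y_{r}y_{s}\cos((r-s)\tfrac{2k\pi}{l})$ by grouping terms according to the value of the index difference. Concretely, I would reindex so that the variables run $0,1,\dots,l-1$ (absorbing the harmless shift by $l$ inside the cosine, since $\cos$ has period $2\pi$ and $(r-l)-(s-l)=r-s$), and then collect, for each fixed difference $q$, all pairs $(r,s)$ with $r-s \equiv \pm q \pmod l$.

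First I would write
\[
FPS(l/k) = \sum_{r=0}^{l-1}\sum_{s=0}^{l-1} y_{r}y_{s}\cos\!\Big((r-s)\tfrac{2k\pi}{l}\Big)
= \sum_{q=0}^{l-1}\cos\!\Big(q\tfrac{2k\pi}{l}\Big)\sum_{s=0}^{l-1} y_{s}y_{s+q},
\]
where the inner sum is exactly $z_{l,q}$ from Definition 2.5 (indices mod $l$). The $q=0$ term contributes $z_{l,0}$. Next I would use the symmetry $z_{l,q} = z_{l,l-q}$ (immediate from the definition and the modular shift of the summation index) together with $\cos(q\tfrac{2k\pi}{l}) = \cos((l-q)\tfrac{2k\pi}{l})$ to fold the sum over $q=1,\dots,l-1$ into a sum over the smaller range. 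When $l$ is odd, the indices $q$ and $l-q$ pair up perfectly for $q=1,\dots,\tfrac{l-1}{2}$, each pair contributing $2 z_{l,q}\cos(q\tfrac{2k\pi}{l})$, giving the first formula. When $l$ is even, the same pairing works for $q=1,\dots,\tfrac{l}{2}-1$, but $q=\tfrac{l}{2}$ is its own partner and must be handled separately; here $z_{l,l/2} = \sum_{t=0}^{l-1} y_{t}y_{t+l/2}$, and because each unordered pair $\{t,t+l/2\}$ is counted twice in that sum one rewrites it as $2\sum_{t=0}^{l/2-1} y_{t}y_{t+l/2}$, yielding the extra term $2\cos(\tfrac{l}{2}\cdot\tfrac{2k\pi}{l})\sum_{t=0}^{l/2-1} y_{t}y_{t+l/2}$ in the even case.

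The only genuinely delicate point is bookkeeping with the modular indices: one must check that $\sum_{s=0}^{l-1} y_{s}y_{s+q}$ (indices mod $l$) indeed equals the coefficient of $\cos(q\tfrac{2k\pi}{l})$ after collapsing the double sum, i.e. that reindexing $r = s+q \bmod l$ does not disturb the range, and that the $q \leftrightarrow l-q$ identification for $z_{l,q}$ is valid — both follow from the cyclic (mod $l$) convention in Definition 2.5. The even case's factor-of-two subtlety at $q = l/2$ is the one place a careless count goes wrong, so I would treat it explicitly rather than folding it into the general pairing argument. Everything else is a routine rearrangement of finite sums and an application of the elementary identity $\cos\theta = \cos(2\pi - \theta)$.
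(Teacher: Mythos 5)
Your proof is correct and follows essentially the same route as the paper: both start from the quadratic form $FPS(l/k)=y^{t}Ay$ with $a_{rs}=\cos((r-s)\tfrac{2k\pi}{l})$ and collect terms by the index difference $q=(r-s)\bmod l$ to produce the $z_{l,q}$, folding $q$ with $l-q$ and isolating the self-paired $q=l/2$ term in the even case. Your reindexed double-sum formulation is simply a cleaner way of writing the paper's explicit enumeration of the equal subdiagonals of $A$.
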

\begin{proof}

It is clear the diagonal elements of the coefficient matrix $A$ are unit elements. Due to the symmetry of matrix $A$, we need to check the lower triangular matrix of $A$ under the diagonal of $A$, which is sufficient for proof. 

The element of the lower triangular matrix of $A$, $a_{rs}  = \cos ((r - s)\frac{{2k\pi }}{l}),r = 2,3, \ldots ,l,r > s$, also has some kind of symmetry. The result is as follows. 

When $l$ is an odd number, $a_{21}  = a_{32}  = a_{43}  =  \ldots  \ldots  = a_{l(l - 1)}  = \cos ((1)\frac{{2k\pi }}{l}) $ and $a_{21} ,a_{32} , \ldots  \ldots ,a_{l(l - 1)} $ also are equal to $a_{l1}$. Because $a_{rs}  = a_{l - t{\kern 1pt} {\kern 1pt} {\kern 1pt} {\kern 1pt} r - s - t} ,r > s,0 \leqslant t < r - s$, similarly, $a_{31}  = a_{42}  =  \ldots  \ldots  = a_{l(l - 2)}  = a_{(l - 1)1}  = a_{l2}  = \cos ((2)\frac{{2k\pi }}
{l}) \ldots  \ldots$, then $a_{(l + 1)/21}  = a_{(l + 3)/22}  =  \ldots  \ldots , = a_{l(l + 1)/2}  = a_{(l + 3)/21}  =  \ldots  \ldots ,a_{l(l - 1)/2}  = \cos (((l - 1)/2)\frac{{2k\pi }}
{l}).$

When $l$ is an even number, we have $a_{21}  = a_{32}  = a_{43}  =  \ldots  \ldots  = a_{l(l - 1)} = a_{l1}  = \cos ((1)\frac{{2k\pi }}
{l})$
and   $a_{31}  = a_{42}  =  \ldots  \ldots  = a_{l(l - 2)} = a_{(l - 1)1}  = a_{l2}  = \cos ((2)\frac{{2k\pi }}
{l}).$ In general, $a_{l/21}  = a_{(l/2 + 1)2}  =  \ldots  \ldots , = a_{l(l/2 + 1)}  = a_{(l/2 + 2)1}  = a_{(l/2 + 3)2}  =  \ldots  \ldots , = a_{l(l/2 - 1)}  = \cos ((l/2 - 1)\frac{{2k\pi }}
 {l}),$ and $a_{(l/2 + 1)1}  = a_{(l/2 + 2)2}  =  \ldots  \ldots , = a_{ll/2}  = \cos ((l/2)\frac{{2k\pi }}
{l}).$

We then substitute those results into the coefficient matrix of the quadratic form $A$ for the $FPS(l/k)$. The following formulas can be obtained.

If $l$ is an odd number,
\begin{equation}
\begin{gathered}
  FPS(l/k) = y_0^2  + y_1^2  + y_2^2  +  \cdots  + y_{l - 1}^2  \hfill \\
  {\kern 1pt} {\kern 1pt} {\kern 1pt} {\kern 1pt} {\kern 1pt} {\kern 1pt} {\kern 1pt} {\kern 1pt} {\kern 1pt} {\kern 1pt} {\kern 1pt} {\kern 1pt} {\kern 1pt} {\kern 1pt} {\kern 1pt} {\kern 1pt} {\kern 1pt} {\kern 1pt} {\kern 1pt} {\kern 1pt} {\kern 1pt} {\kern 1pt} {\kern 1pt} {\kern 1pt} {\kern 1pt} {\kern 1pt} {\kern 1pt} {\kern 1pt} {\kern 1pt} {\kern 1pt} {\kern 1pt} {\kern 1pt} {\kern 1pt} {\kern 1pt} {\kern 1pt} {\kern 1pt} {\kern 1pt} {\kern 1pt} {\kern 1pt} {\kern 1pt} {\kern 1pt} {\kern 1pt} {\kern 1pt} {\kern 1pt} {\kern 1pt} {\kern 1pt} {\kern 1pt} {\kern 1pt} {\kern 1pt} {\kern 1pt} {\kern 1pt} {\kern 1pt} {\kern 1pt} {\kern 1pt} {\kern 1pt} {\kern 1pt} {\kern 1pt}  + 2\cos ((1)\frac{{2k\pi }}
{l})(y_0 y_1  + y_1 y_2  + y_2 y_3  + y_3 y_4  \cdots y_{l - 1} y_0 ) \hfill \\
  {\kern 1pt} {\kern 1pt} {\kern 1pt} {\kern 1pt} {\kern 1pt} {\kern 1pt} {\kern 1pt} {\kern 1pt} {\kern 1pt} {\kern 1pt} {\kern 1pt} {\kern 1pt} {\kern 1pt} {\kern 1pt} {\kern 1pt} {\kern 1pt} {\kern 1pt} {\kern 1pt} {\kern 1pt} {\kern 1pt} {\kern 1pt} {\kern 1pt} {\kern 1pt} {\kern 1pt} {\kern 1pt} {\kern 1pt} {\kern 1pt} {\kern 1pt} {\kern 1pt} {\kern 1pt} {\kern 1pt} {\kern 1pt} {\kern 1pt} {\kern 1pt} {\kern 1pt} {\kern 1pt} {\kern 1pt} {\kern 1pt} {\kern 1pt} {\kern 1pt} {\kern 1pt} {\kern 1pt} {\kern 1pt} {\kern 1pt} {\kern 1pt} {\kern 1pt} {\kern 1pt} {\kern 1pt} {\kern 1pt} {\kern 1pt} {\kern 1pt} {\kern 1pt} {\kern 1pt} {\kern 1pt} {\kern 1pt} {\kern 1pt} {\kern 1pt} {\kern 1pt}  + 2\cos ((2)\frac{{2k\pi }}
{l})(y_0 y_2  + y_1 y_3  + y_2 y_4  +  + y_3 y_5  \cdots y_{l - 2} y_0  + y_{l - 1} y_1 ) \hfill \\
  {\kern 1pt} {\kern 1pt} {\kern 1pt} {\kern 1pt} {\kern 1pt} {\kern 1pt} {\kern 1pt} {\kern 1pt} {\kern 1pt} {\kern 1pt} {\kern 1pt} {\kern 1pt} {\kern 1pt} {\kern 1pt} {\kern 1pt} {\kern 1pt} {\kern 1pt} {\kern 1pt} {\kern 1pt} {\kern 1pt} {\kern 1pt} {\kern 1pt} {\kern 1pt} {\kern 1pt} {\kern 1pt} {\kern 1pt} {\kern 1pt} {\kern 1pt} {\kern 1pt} {\kern 1pt} {\kern 1pt} {\kern 1pt} {\kern 1pt} {\kern 1pt} {\kern 1pt} {\kern 1pt} {\kern 1pt} {\kern 1pt} {\kern 1pt} {\kern 1pt} {\kern 1pt} {\kern 1pt} {\kern 1pt} {\kern 1pt} {\kern 1pt} {\kern 1pt} {\kern 1pt} {\kern 1pt} {\kern 1pt} {\kern 1pt} {\kern 1pt} {\kern 1pt} {\kern 1pt} {\kern 1pt} {\kern 1pt} {\kern 1pt} {\kern 1pt}  + 2\cos ((3)\frac{{2k\pi }}
{l})(y_0 y_3  + y_1 y_4  + y_2 y_5  +  + y_3 y_6  \cdots y_{l - 3} y_0  + y_{l - 2} y_1  + y_{l - 1} y_2 ) \hfill \\
  {\kern 1pt} {\kern 1pt} {\kern 1pt} {\kern 1pt} {\kern 1pt} {\kern 1pt} {\kern 1pt} {\kern 1pt} {\kern 1pt} {\kern 1pt} {\kern 1pt} {\kern 1pt} {\kern 1pt} {\kern 1pt} {\kern 1pt} {\kern 1pt} {\kern 1pt} {\kern 1pt} {\kern 1pt} {\kern 1pt} {\kern 1pt} {\kern 1pt} {\kern 1pt} {\kern 1pt} {\kern 1pt} {\kern 1pt} {\kern 1pt} {\kern 1pt} {\kern 1pt} {\kern 1pt} {\kern 1pt} {\kern 1pt} {\kern 1pt} {\kern 1pt} {\kern 1pt} {\kern 1pt} {\kern 1pt} {\kern 1pt} {\kern 1pt} {\kern 1pt} {\kern 1pt} {\kern 1pt} {\kern 1pt} {\kern 1pt} {\kern 1pt} {\kern 1pt} {\kern 1pt} {\kern 1pt} {\kern 1pt} {\kern 1pt} {\kern 1pt} {\kern 1pt} {\kern 1pt} {\kern 1pt} {\kern 1pt} {\kern 1pt} {\kern 1pt} {\kern 1pt} {\kern 1pt} {\kern 1pt} {\kern 1pt}  \cdots  \hfill \\
  {\kern 1pt} {\kern 1pt} {\kern 1pt} {\kern 1pt} {\kern 1pt} {\kern 1pt} {\kern 1pt} {\kern 1pt} {\kern 1pt} {\kern 1pt} {\kern 1pt} {\kern 1pt} {\kern 1pt} {\kern 1pt} {\kern 1pt} {\kern 1pt} {\kern 1pt} {\kern 1pt} {\kern 1pt} {\kern 1pt} {\kern 1pt} {\kern 1pt} {\kern 1pt} {\kern 1pt} {\kern 1pt} {\kern 1pt} {\kern 1pt} {\kern 1pt} {\kern 1pt} {\kern 1pt} {\kern 1pt} {\kern 1pt} {\kern 1pt} {\kern 1pt} {\kern 1pt} {\kern 1pt} {\kern 1pt} {\kern 1pt} {\kern 1pt} {\kern 1pt} {\kern 1pt} {\kern 1pt} {\kern 1pt} {\kern 1pt} {\kern 1pt} {\kern 1pt} {\kern 1pt} {\kern 1pt} {\kern 1pt} {\kern 1pt} {\kern 1pt} {\kern 1pt} {\kern 1pt} {\kern 1pt} {\kern 1pt}  + 2\cos (((l - 1)/2)\frac{{2k\pi }}
{l})(y_0 y_{(l - 1)/2}  + y_1 y_{(l + 1)/2}  + y_2 y_{(l + 3)/2}  +  \hfill \\
  {\kern 1pt} {\kern 1pt} {\kern 1pt} {\kern 1pt} {\kern 1pt} {\kern 1pt} {\kern 1pt} {\kern 1pt} {\kern 1pt} {\kern 1pt} {\kern 1pt} {\kern 1pt} {\kern 1pt} {\kern 1pt} {\kern 1pt} {\kern 1pt} {\kern 1pt} {\kern 1pt} {\kern 1pt} {\kern 1pt} {\kern 1pt} {\kern 1pt} {\kern 1pt} {\kern 1pt} {\kern 1pt} {\kern 1pt} {\kern 1pt} {\kern 1pt} {\kern 1pt} {\kern 1pt} {\kern 1pt} {\kern 1pt} {\kern 1pt} {\kern 1pt} {\kern 1pt} {\kern 1pt} {\kern 1pt} {\kern 1pt} {\kern 1pt} {\kern 1pt} {\kern 1pt} {\kern 1pt} {\kern 1pt} {\kern 1pt} {\kern 1pt} {\kern 1pt} {\kern 1pt} {\kern 1pt} {\kern 1pt} {\kern 1pt} {\kern 1pt} {\kern 1pt} {\kern 1pt} {\kern 1pt} {\kern 1pt} {\kern 1pt} {\kern 1pt} {\kern 1pt} {\kern 1pt} {\kern 1pt} {\kern 1pt} {\kern 1pt} {\kern 1pt} {\kern 1pt} {\kern 1pt} {\kern 1pt} {\kern 1pt} {\kern 1pt} {\kern 1pt} {\kern 1pt} {\kern 1pt} {\kern 1pt} {\kern 1pt} {\kern 1pt} {\kern 1pt} {\kern 1pt} {\kern 1pt} {\kern 1pt} {\kern 1pt} {\kern 1pt} {\kern 1pt} {\kern 1pt} {\kern 1pt} {\kern 1pt} {\kern 1pt} {\kern 1pt} {\kern 1pt} {\kern 1pt} {\kern 1pt} {\kern 1pt} {\kern 1pt} {\kern 1pt} {\kern 1pt} {\kern 1pt} {\kern 1pt} {\kern 1pt} {\kern 1pt} {\kern 1pt} {\kern 1pt} {\kern 1pt} {\kern 1pt} {\kern 1pt} {\kern 1pt} {\kern 1pt} {\kern 1pt} {\kern 1pt} {\kern 1pt} {\kern 1pt} {\kern 1pt} {\kern 1pt} {\kern 1pt} {\kern 1pt} {\kern 1pt} {\kern 1pt} {\kern 1pt} {\kern 1pt} {\kern 1pt} {\kern 1pt} {\kern 1pt} {\kern 1pt}  \cdots y_{(l - (l - 1)/2)} y_{_0 }  +  \cdots  + y_{l - 1} y_{(l - 3)/2} ). \hfill \\ 
\end{gathered} 
%\]
\end{equation}

If $l$ is an even number,
\begin{equation}
\begin{gathered}
  FPS(l/k) = y_0^2  + y_1^2  + y_2^2  +  \cdots  + y_{l - 1}^2  \hfill \\
  {\kern 1pt} {\kern 1pt} {\kern 1pt} {\kern 1pt} {\kern 1pt} {\kern 1pt} {\kern 1pt} {\kern 1pt} {\kern 1pt} {\kern 1pt} {\kern 1pt} {\kern 1pt} {\kern 1pt} {\kern 1pt} {\kern 1pt} {\kern 1pt} {\kern 1pt} {\kern 1pt} {\kern 1pt} {\kern 1pt} {\kern 1pt} {\kern 1pt} {\kern 1pt} {\kern 1pt} {\kern 1pt} {\kern 1pt} {\kern 1pt} {\kern 1pt} {\kern 1pt} {\kern 1pt} {\kern 1pt} {\kern 1pt} {\kern 1pt} {\kern 1pt} {\kern 1pt} {\kern 1pt} {\kern 1pt} {\kern 1pt} {\kern 1pt} {\kern 1pt} {\kern 1pt} {\kern 1pt} {\kern 1pt} {\kern 1pt} {\kern 1pt} {\kern 1pt} {\kern 1pt} {\kern 1pt} {\kern 1pt} {\kern 1pt} {\kern 1pt} {\kern 1pt} {\kern 1pt} {\kern 1pt} {\kern 1pt} {\kern 1pt} {\kern 1pt}  + 2\cos ((1)\frac{{2k\pi }}
{l})(y_0 y_1  + y_1 y_2  + y_2 y_3  + y_3 y_4  \cdots y_{l - 1} y_0 ) \hfill \\
  {\kern 1pt} {\kern 1pt} {\kern 1pt} {\kern 1pt} {\kern 1pt} {\kern 1pt} {\kern 1pt} {\kern 1pt} {\kern 1pt} {\kern 1pt} {\kern 1pt} {\kern 1pt} {\kern 1pt} {\kern 1pt} {\kern 1pt} {\kern 1pt} {\kern 1pt} {\kern 1pt} {\kern 1pt} {\kern 1pt} {\kern 1pt} {\kern 1pt} {\kern 1pt} {\kern 1pt} {\kern 1pt} {\kern 1pt} {\kern 1pt} {\kern 1pt} {\kern 1pt} {\kern 1pt} {\kern 1pt} {\kern 1pt} {\kern 1pt} {\kern 1pt} {\kern 1pt} {\kern 1pt} {\kern 1pt} {\kern 1pt} {\kern 1pt} {\kern 1pt} {\kern 1pt} {\kern 1pt} {\kern 1pt} {\kern 1pt} {\kern 1pt} {\kern 1pt} {\kern 1pt} {\kern 1pt} {\kern 1pt} {\kern 1pt} {\kern 1pt} {\kern 1pt} {\kern 1pt} {\kern 1pt} {\kern 1pt} {\kern 1pt} {\kern 1pt} {\kern 1pt}  + 2\cos ((2)\frac{{2k\pi }}
{l})(y_0 y_2  + y_1 y_3  + y_2 y_4  +  + y_3 y_5  \cdots y_{l - 2} y_0  + y_{l - 1} y_1 ) \hfill \\
  {\kern 1pt} {\kern 1pt} {\kern 1pt} {\kern 1pt} {\kern 1pt} {\kern 1pt} {\kern 1pt} {\kern 1pt} {\kern 1pt} {\kern 1pt} {\kern 1pt} {\kern 1pt} {\kern 1pt} {\kern 1pt} {\kern 1pt} {\kern 1pt} {\kern 1pt} {\kern 1pt} {\kern 1pt} {\kern 1pt} {\kern 1pt} {\kern 1pt} {\kern 1pt} {\kern 1pt} {\kern 1pt} {\kern 1pt} {\kern 1pt} {\kern 1pt} {\kern 1pt} {\kern 1pt} {\kern 1pt} {\kern 1pt} {\kern 1pt} {\kern 1pt} {\kern 1pt} {\kern 1pt} {\kern 1pt} {\kern 1pt} {\kern 1pt} {\kern 1pt} {\kern 1pt} {\kern 1pt} {\kern 1pt} {\kern 1pt} {\kern 1pt} {\kern 1pt} {\kern 1pt} {\kern 1pt} {\kern 1pt} {\kern 1pt} {\kern 1pt} {\kern 1pt} {\kern 1pt} {\kern 1pt} {\kern 1pt} {\kern 1pt} {\kern 1pt}  + 2\cos ((3)\frac{{2k\pi }}
{l})(y_0 y_3  + y_1 y_4  + y_2 y_5  +  + y_3 y_6  \cdots y_{l - 3} y_0  + y_{l - 2} y_1  + y_{l - 1} y_2 ) \hfill \\
  {\kern 1pt} {\kern 1pt} {\kern 1pt} {\kern 1pt} {\kern 1pt} {\kern 1pt} {\kern 1pt} {\kern 1pt} {\kern 1pt} {\kern 1pt} {\kern 1pt} {\kern 1pt} {\kern 1pt} {\kern 1pt} {\kern 1pt} {\kern 1pt} {\kern 1pt} {\kern 1pt} {\kern 1pt} {\kern 1pt} {\kern 1pt} {\kern 1pt} {\kern 1pt} {\kern 1pt} {\kern 1pt} {\kern 1pt} {\kern 1pt} {\kern 1pt} {\kern 1pt} {\kern 1pt} {\kern 1pt} {\kern 1pt} {\kern 1pt} {\kern 1pt} {\kern 1pt} {\kern 1pt} {\kern 1pt} {\kern 1pt} {\kern 1pt} {\kern 1pt} {\kern 1pt} {\kern 1pt} {\kern 1pt} {\kern 1pt} {\kern 1pt} {\kern 1pt} {\kern 1pt} {\kern 1pt} {\kern 1pt} {\kern 1pt} {\kern 1pt} {\kern 1pt} {\kern 1pt} {\kern 1pt} {\kern 1pt} {\kern 1pt} {\kern 1pt} {\kern 1pt} {\kern 1pt} {\kern 1pt} {\kern 1pt}  \cdots  \hfill \\
  {\kern 1pt} {\kern 1pt} {\kern 1pt} {\kern 1pt} {\kern 1pt} {\kern 1pt} {\kern 1pt} {\kern 1pt} {\kern 1pt} {\kern 1pt} {\kern 1pt} {\kern 1pt} {\kern 1pt} {\kern 1pt} {\kern 1pt} {\kern 1pt} {\kern 1pt} {\kern 1pt} {\kern 1pt} {\kern 1pt} {\kern 1pt} {\kern 1pt} {\kern 1pt} {\kern 1pt} {\kern 1pt} {\kern 1pt} {\kern 1pt} {\kern 1pt} {\kern 1pt} {\kern 1pt} {\kern 1pt} {\kern 1pt} {\kern 1pt} {\kern 1pt} {\kern 1pt} {\kern 1pt} {\kern 1pt} {\kern 1pt} {\kern 1pt} {\kern 1pt} {\kern 1pt} {\kern 1pt} {\kern 1pt} {\kern 1pt} {\kern 1pt} {\kern 1pt} {\kern 1pt} {\kern 1pt} {\kern 1pt} {\kern 1pt} {\kern 1pt} {\kern 1pt} {\kern 1pt} {\kern 1pt} {\kern 1pt}  + 2\cos ((l/2 - 1)\frac{{2k\pi }}
{l})(y_0 y_{l/2 - 1}  + y_1 y_{l/2}  + y_2 y_{l/2 + 1}  +  \cdots y_{l - 3} y_{l/2 - 2}  + y_{l - 2} y_{l/2 - 1}  + y_{l - 1} y_{l/2} ) \hfill \\
  {\kern 1pt} {\kern 1pt} {\kern 1pt} {\kern 1pt} {\kern 1pt} {\kern 1pt} {\kern 1pt} {\kern 1pt} {\kern 1pt} {\kern 1pt} {\kern 1pt} {\kern 1pt} {\kern 1pt} {\kern 1pt} {\kern 1pt} {\kern 1pt} {\kern 1pt} {\kern 1pt} {\kern 1pt} {\kern 1pt} {\kern 1pt} {\kern 1pt} {\kern 1pt} {\kern 1pt} {\kern 1pt} {\kern 1pt} {\kern 1pt} {\kern 1pt} {\kern 1pt} {\kern 1pt} {\kern 1pt} {\kern 1pt} {\kern 1pt} {\kern 1pt} {\kern 1pt} {\kern 1pt} {\kern 1pt} {\kern 1pt} {\kern 1pt} {\kern 1pt} {\kern 1pt} {\kern 1pt} {\kern 1pt} {\kern 1pt} {\kern 1pt} {\kern 1pt} {\kern 1pt} {\kern 1pt} {\kern 1pt} {\kern 1pt} {\kern 1pt} {\kern 1pt} {\kern 1pt} {\kern 1pt} {\kern 1pt}  + 2\cos ((l/2)\frac{{2k\pi }}
{l})(y_0 y_{l/2}  + y_1 y_{l/2 + 1}  + y_2 y_{l/2 + 2}  +  \cdots y_{l/2 - 1} y_{l - 1} ). \hfill \\ 
\end{gathered} 
%\]
\end{equation}
The last term in equation (2.17), $\cos ((l/2)\frac{{2k\pi }}
{l})(y_0 y_{l/2}  + y_1 y_{l/2 + 1}  + y_2 y_{l/2 + 2}  +  \cdots  \cdots  + y_{l/2 - 1} y_{l - 1} )$ can be written as $\cos ((l/2)\frac{{2k\pi }}
 {l})\sum\limits_{t = 0}^{l/2 - 1} {y_t y_{t + l/2} } $. Noticing the definition of $z_{l,q}$, the formulas (2.16) and (2.17) may be written as follows.

If $l$ is an odd number,
\begin{equation}	
 % MathType!MTEF!2!1!+-
 % feaafiart1ev1aaatCvAUfeBSjuyZL2yd9gzLbvyNv2CaerbuLwBLn
 % hiov2DGi1BTfMBaeXatLxBI9gBaerbd9wDYLwzYbItLDharqqtubsr
 % 4rNCHbWexLMBbXgBd9gzLbvyNv2CaeHbl7mZLdGeaGqiVCI8FfYJH8
 % YrFfeuY-Hhbbf9v8qqaqFr0xc9pk0xbba9q8WqFfeaY-biLkVcLq-J
 % Hqpepeea0-as0Fb9pgeaYRXxe9vr0-vr0-vqpWqaaeaabiGaciaaca
 % qabeaadaqaaqaafaGcbaGaamOraiaadcfacaWGtbGaaiikaiaadYga
 % caGGVaGaam4AaiaacMcacqGH9aqpcaWG6bWaaSbaaSqaaiaadYgaca
 % GGSaGaaGimaaqabaGccqGHRaWkcaaIYaWaaabCaeaacaWG6bWaaSba
 % aSqaaiaadYgacaGGSaGaamyCaaqabaGcciGGJbGaai4Baiaacohaca
 % GGOaGaaiikaiaadghacaGGPaaaleaacaWGXbGaeyypa0JaaGymaaqa
 % amaalaaabaGaamiBaiabgkHiTiaaigdaaeaacaaIYaaaaaqdcqGHri
 % s5aOWaaSaaaeaacaaIYaGaam4Aaiabec8aWbqaaiaadYgaaaGaaiyk
 % aaaa!62BF!
 %\[
 FPS(l/k) = z_{l,0}  + 2\sum\limits_{q = 1}^{\frac{{l - 1}}
 {2}} {z_{l,q} \cos ((q)} \frac{{2k\pi }}
 {l})
 %\]
\end{equation}
   	
   If $l$ is an even number,
   \begin{equation}
    % MathType!MTEF!2!1!+-
    % feaafiart1ev1aaatCvAUfeBSjuyZL2yd9gzLbvyNv2CaerbuLwBLn
    % hiov2DGi1BTfMBaeXatLxBI9gBaerbd9wDYLwzYbItLDharqqtubsr
    % 4rNCHbWexLMBbXgBd9gzLbvyNv2CaeHbl7mZLdGeaGqiVCI8FfYJH8
    % YrFfeuY-Hhbbf9v8qqaqFr0xc9pk0xbba9q8WqFfeaY-biLkVcLq-J
    % Hqpepeea0-as0Fb9pgeaYRXxe9vr0-vr0-vqpWqaaeaabiGaciaaca
    % qabeaadaqaaqaafaGcbaGaamOraiaadcfacaWGtbGaaiikaiaadYga
    % caGGVaGaam4AaiaacMcacqGH9aqpcaWG6bWaaSbaaSqaaiaadYgaca
    % GGSaGaaGimaaqabaGccqGHRaWkcaaIYaWaaabCaeaacaWG6bWaaSba
    % aSqaaiaadYgacaGGSaGaamyCaaqabaGcciGGJbGaai4Baiaacohaca
    % GGOaGaaiikaiaadghacaGGPaWaaSaaaeaacaaIYaGaam4Aaiabec8a
    % WbqaaiaadYgaaaGaaiykaiabgUcaRaWcbaGaamyCaiabg2da9iaaig
    % daaeaadaWcaaqaaiaadYgaaeaacaaIYaaaaiabgkHiTiaaigdaa0Ga
    % eyyeIuoakiaaikdaciGGJbGaai4BaiaacohacaGGOaGaaiikamaala
    % aabaGaamiBaaqaaiaaikdaaaGaaiykamaalaaabaGaaGOmaiaadUga
    % cqaHapaCaeaacaWGSbaaaiaacMcadaaeWbqaaiaadMhadaWgaaWcba
    % GaamiDaaqabaGccaWG5bWaaSbaaSqaaiaadshacqGHRaWkdaWcaaqa
    % aiaadYgaaeaacaaIYaaaaaqabaaabaGaamiDaiabg2da9iaaicdaae
    % aadaWcaaqaaiaadYgaaeaacaaIYaaaaiabgkHiTiaaigdaa0Gaeyye
    % Iuoaaaa!7F4D!
    %\[
    FPS(l/k) = z_{l,0}  + 2\sum\limits_{q = 1}^{\frac{l}
    {2} - 1} {z_{l,q} \cos ((q)\frac{{2k\pi }}
    {l}) + } 2\cos ((\frac{l}
    {2})\frac{{2k\pi }}
    {l})\sum\limits_{t = 0}^{\frac{l}
    {2} - 1} {y_t y_{t + \frac{l}
    {2}} } 
    %\]
    \end{equation}	
\end{proof}
 
Based on Theorem 2.3 and Corollary 2.1, $FPS(l/k)$ may be re-written by following theorem, which may improve the technique for computing $FPS(l/k)$. 

\begin{theorem} 
	\begin{equation}
% MathType!MTEF!2!1!+-
% feaafiart1ev1aaatCvAUfeBSjuyZL2yd9gzLbvyNv2CaerbuLwBLn
% hiov2DGi1BTfMBaeXatLxBI9gBaerbd9wDYLwzYbItLDharqqtubsr
% 4rNCHbWexLMBbXgBd9gzLbvyNv2CaeHbl7mZLdGeaGqiVu0Je9sqqr
% pepC0xbbL8F4rqqrFfpeea0xe9Lq-Jc9vqaqpepm0xbba9pwe9Q8fs
% 0-yqaqpepae9pg0FirpepeKkFr0xfr-xfr-xb9adbaqaaeGaciGaai
% aabeqaamaabaabauaakeaacaWGgbGaamiuaiaadofacaGGOaGaamiB
% aiaac+cacaWGRbGaaiykaiabg2da9iaadAeacaWGqbGaam4uaiaacI
% cacaWGSbGaai4laiaacIcacaWGSbGaeyOeI0Iaam4AaiaacMcacaGG
% SaGaaiykaiaaigdacqGHKjYOcaWGRbGaeyizImQaamiBaiabgkHiTi
% aaigdaaaa!587D!
%\[
FPS(l/k) = FPS(l/(l - k),)1 \leqslant k \leqslant l - 1
%\]
  \end{equation}
\end{theorem}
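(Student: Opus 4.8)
The plan is to compute $FPS(l/k)$ directly from the closed-form expressions furnished by Theorem~2.3 and to observe that each summand is invariant under the substitution $k \mapsto l-k$. Since the coefficients $z_{l,0}$ and $z_{l,q}$ (and the quantity $\sum_{t} y_t y_{t+l/2}$ in the even case) depend only on the congruence derivative sequence $y$ and carry no $k$, the entire $k$-dependence is concentrated in the cosine factors, so the whole argument reduces to a trigonometric periodicity check. I would carry this out in the two parities of $l$ separately, mirroring the statement of Theorem~2.3.

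For $l$ odd, Theorem~2.3 gives $FPS(l/k) = z_{l,0} + 2\sum_{q=1}^{(l-1)/2} z_{l,q}\cos(q\,\tfrac{2k\pi}{l})$. Replacing $k$ by $l-k$ turns the generic factor into $\cos\bigl(q\,\tfrac{2(l-k)\pi}{l}\bigr) = \cos\bigl(2\pi q - q\,\tfrac{2k\pi}{l}\bigr) = \cos\bigl(q\,\tfrac{2k\pi}{l}\bigr)$, where the last equality uses that $q$ is an integer, so $2\pi q$ is a whole number of periods of the cosine. Every term is therefore unchanged, giving $FPS(l/k) = FPS(l/(l-k))$. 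Note that $1 \le k \le l-1$ forces $1 \le l-k \le l-1$ as well, so the right-hand side is a legitimate value of $FPS$.

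For $l$ even, $FPS(l/k) = z_{l,0} + 2\sum_{q=1}^{l/2-1} z_{l,q}\cos(q\,\tfrac{2k\pi}{l}) + 2\cos\bigl(\tfrac{l}{2}\,\tfrac{2k\pi}{l}\bigr)\sum_{t=0}^{l/2-1} y_t y_{t+l/2}$. The sum over $q$ is handled verbatim as in the odd case. For the remaining term, $\tfrac{l}{2}\,\tfrac{2k\pi}{l} = k\pi$, so the factor equals $\cos(k\pi) = (-1)^k$; under $k \mapsto l-k$ it becomes $\cos((l-k)\pi) = (-1)^{l-k} = (-1)^{l}(-1)^{k} = (-1)^k$ precisely because $l$ is even. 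Since $\sum_{t} y_t y_{t+l/2}$ has no $k$-dependence, this term is invariant too, and the claimed identity follows. The one place the bookkeeping could slip is exactly this last term: it is the evenness of $l$ that makes $(-1)^{l-k} = (-1)^k$, which is why Theorem~2.3 isolates that term only when $l$ is even, and I would make sure to flag this rather than lump it with the generic cosines.

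As an independent consistency check that does not invoke Theorem~2.3 at all, one can argue straight from Corollary~2.1: since $y$ is real, the identity $X(kn) = X^*((l-k)n)$ says $Y(l-k) = \overline{Y(k)}$, whence $FPS(l/(l-k)) = Y^*(l-k)Y(l-k) = \overline{Y(k)}\,Y(k) = Y^*(k)Y(k) = FPS(l/k)$. This matches the term-by-term computation above and confirms the statement; in the write-up I would present the term-by-term version as the main proof (as the theorem is billed as a consequence of Theorem~2.3) and mention the conjugate-symmetry route as a remark.
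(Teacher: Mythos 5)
Your proposal is correct and takes essentially the same route as the paper: the paper derives Theorem 2.4 from the formulas of Theorem 2.3 together with the cosine identity $\cos(q\,\tfrac{2k\pi}{l}) = \cos(q\,\tfrac{2(l-k)\pi}{l})$ (its formula (2.21), explicitly called ``the foundation of Theorem 2.4''), and also points to Corollary 2.1, exactly matching your main argument and your conjugate-symmetry remark. Your treatment is in fact more careful than the paper's, since you separately verify the $q=l/2$ term in the even case, where the invariance relies on $(-1)^{l-k}=(-1)^k$ holding because $l$ is even.
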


According to Theorem 2.4, to compute all of the $FPS(l/k)$, $1 \leqslant k \leqslant l - 1$, we only require about half of computing costs. 

For $k = 1$, noticing formulas (2.18) and (2.19) and the symmetry of $FPS(l/k)$, we need to calculate $(l-1)/2$ coefficients of quadratics $y_0 ,y_1 , \ldots ,y_{l - 1}$ when $l$ is an odd number; and $l/2$ coefficients when $l$ is an even number. 

It is valuable to notice on the property of a cosine function,
\begin{equation}   
% MathType!MTEF!2!1!+-
% feaafiart1ev1aaatCvAUfeBSjuyZL2yd9gzLbvyNv2CaerbuLwBLn
% hiov2DGi1BTfMBaeXatLxBI9gBaerbd9wDYLwzYbItLDharqqtubsr
% 4rNCHbWexLMBbXgBd9gzLbvyNv2CaeHbl7mZLdGeaGqiVu0Je9sqqr
% pepC0xbbL8F4rqqrFfpeea0xe9Lq-Jc9vqaqpepm0xbba9pwe9Q8fs
% 0-yqaqpepae9pg0FirpepeKkFr0xfr-xfr-xb9adbaqaaeGaciGaai
% aabeqaamaabaabauaakeaaciGGJbGaai4BaiaacohacaGGOaGaaiik
% aiaadghacaGGPaWaaSaaaeaacaaIYaGaam4Aaiabec8aWbqaaiaadY
% gaaaGaaiykaiabg2da9iGacogacaGGVbGaai4CaiaacIcacaGGOaGa
% amyCaiaacMcadaWcaaqaaiaaikdacaGGOaGaamiBaiabgkHiTiaadU
% gacaGGPaGaeqiWdahabaGaamiBaaaacaGGPaGaaiilaiaaicdacqGH
% KjYOcaWGRbGaeyizImQaamiBaiabgkHiTiaaigdaaaa!617C!
%\[
\cos ((q)\frac{{2k\pi }}
{l}) = \cos ((q)\frac{{2(l - k)\pi }}
{l}),0 \leqslant k \leqslant l - 1.
%\]
\end{equation}
Certainly, it is the foundation of Theorem 2.4 and an important formula to prove the following theorem.
	
\begin{theorem} The coefficient matrix of quadratic form $A$ when $k=1$ is sufficient for computing $FPS(l/k)$ when $k \ne 1$.
\end{theorem}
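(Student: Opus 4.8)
The plan is to show that every entry of the coefficient matrix $A$ attached to an arbitrary frequency $k$ is already one of the entries of the matrix $A$ attached to $k=1$. Hence, once the $k=1$ matrix has been formed --- equivalently, once its $\lfloor l/2\rfloor+1$ distinct values $c_q=\cos\!\big(q\,\tfrac{2\pi}{l}\big)$, $q=0,1,\dots,\lfloor l/2\rfloor$, have been computed --- the quadratic form $y^{t}Ay$ representing $FPS(l/k)$ for any other $k$ can be assembled by a pure reindexing of those values, with no further trigonometric evaluation.

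First I would invoke the closed form for the entries of the coefficient matrix at frequency $k$ obtained in the proof of Theorem 2.2, namely $a_{rs}^{(k)}=\cos\!\big((r-s)\tfrac{2k\pi}{l}\big)$ for $r,s=1,\dots,l$. Rewriting the argument as $(r-s)\tfrac{2k\pi}{l}=\big(k(r-s)\big)\tfrac{2\pi}{l}$ and using the $2\pi$-periodicity of the cosine, I would reduce $k(r-s)$ modulo $l$: with $j\equiv k(r-s)\pmod{l}$ and $0\le j\le l-1$ one gets $a_{rs}^{(k)}=\cos\!\big(j\,\tfrac{2\pi}{l}\big)$. Then, using the even symmetry of the cosine in the form of formula (2.21), i.e. $\cos\!\big(j\,\tfrac{2\pi}{l}\big)=\cos\!\big((l-j)\tfrac{2\pi}{l}\big)$, I would fold $j$ into $\{0,1,\dots,\lfloor l/2\rfloor\}$ by setting $\langle j\rangle:=\min\{j,\,l-j\}$, so that $a_{rs}^{(k)}=c_{\langle j\rangle}$. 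Since the generic entry of the $k=1$ matrix is $a_{rs}^{(1)}=\cos\!\big((r-s)\tfrac{2\pi}{l}\big)$, choosing any index pair whose difference equals $\langle j\rangle$ --- for instance $(\langle j\rangle+1,\,1)$ --- shows $a_{rs}^{(k)}=a^{(1)}_{\langle j\rangle+1,\,1}$; thus the entire matrix $A^{(k)}$ is recovered from $A^{(1)}$ through the index map $(r,s)\mapsto\langle k(r-s)\bmod l\rangle$. An equivalent and slightly cleaner route is to start from the expansions (2.18) and (2.19) of Theorem 2.3: there $FPS(l/k)$ is a fixed linear combination of the self $q$-shift sums $z_{l,q}$, which do not depend on $k$, with coefficients $2\cos\!\big(q\,\tfrac{2k\pi}{l}\big)=2c_{\langle qk\bmod l\rangle}$, each of which is one of the precomputed entries of $A^{(1)}$.

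I do not anticipate a real obstacle --- the content is essentially bookkeeping. The single point that needs care is making the reduction precise: carrying out the reduction modulo $l$ followed by the fold $j\mapsto\langle j\rangle$ so that the resulting index genuinely lies in $\{0,1,\dots,\lfloor l/2\rfloor\}$, and, when $l$ is even, checking separately that the coefficient $\cos\!\big(\tfrac{l}{2}\cdot\tfrac{2k\pi}{l}\big)=\cos(k\pi)=(-1)^{k}$ of the half-shift term $\sum_{t=0}^{l/2-1}y_t y_{t+l/2}$ in (2.19) is also among the $c_q$ (it equals $c_0$ or $c_{l/2}$) and hence costs nothing. With these in hand the statement follows at once, and the computational payoff is explicit: all of the spectra $FPS(l/k)$, $1\le k\le l-1$, are obtained from the single matrix $A^{(1)}$ --- that is, from $O(l)$ cosine evaluations --- together with the $O(l)$ shift sums $z_{l,q}$, which combined with Theorem 2.4 and Corollary 2.1 further halves the work actually required.
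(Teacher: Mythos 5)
Your proposal is correct and follows essentially the same route as the paper: both arguments observe that the coefficients $\cos\bigl(t\,\tfrac{2k\pi}{l}\bigr)$ appearing for general $k$ are obtained from the $k=1$ values $\cos\bigl(q\,\tfrac{2\pi}{l}\bigr)$ by reducing the index and folding with the evenness identity (2.21). In fact your version is slightly more careful than the paper's: you explicitly reduce $k(r-s)$ modulo $l$ \emph{before} folding into $\{0,1,\dots,\lfloor l/2\rfloor\}$, whereas the paper compares $tk$ directly with $(l-1)/2$ and invokes $l-tk$, which tacitly assumes $tk<l$ (false for, e.g., $l=11$, $k=t=5$); you also treat the even-$l$ half-shift term that the paper dismisses with ``without loss of generality.''
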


\begin{proof}
Without loss of generality, we show the case when $l$ is an odd number. For a fixed $k$, $k \ne 1$, from Theorem 2.4, the range of $k$ should be in $2 \leqslant k \leqslant (l - 1)/2$ and the $(l-1)/2$ coefficients of quadratics of $y_0 ,y_1 , \cdots ,y_{l - 1} $ are $\cos ((t)\frac{{2k\pi }}{l}),t = 1,2, \ldots ,\frac{{l - 1}}{2}$
   
For $k=1$, we obtain the coefficients 
$\cos ((q)\frac{{2\pi }}{l}),q = 1,2, \ldots ,\frac{{l - 1}}{2}$.
   
 If the $(l-1)/2$ coefficients, 
 $ \cos ((t)\frac{{2k\pi }}
   {l}),t = 1,2, \cdots ,(l - 1)/2$ can be represented by $ \cos((q)\frac{{2\pi }}
      {l}),q = 1,2, \cdots ,(l - 1)/2$, then, the proposition is proved. 
   
 It is known that the interval of $k$ is $[2, (l-1)/2]$, so, for $t = 1, \cos ((k)\frac{{2\pi }}{l})$, the first coefficient of $FPS(l/k)$, is one of $\cos ((q)\frac{{2\pi }}{l}),q = 1,2, \ldots ,(l - 1)/2$. When $t \ne 1$, we may have one of two results: if $tk$ in the interval $[1, (l-1)/2]$, it is obvious that $\cos ((t)\frac{{2k\pi }}{l})$ is one of $\cos ((q)\frac{{2\pi }}{l}),q = 1,2, \ldots ,(l - 1)/2$; otherwise, if $tk > (l - 1)/2$, based on formula (2.21) since $l - tk$  belongs to $[1, (l-1)/2]$.
\end{proof}
   
\section{Fast Algorithm}
\subsection{Fast algorithm for computing fractional period spectrum at period $l/k$}
Based on Theorem 2.5, the algorithm should do a build-in data in the computer. The data contains the coefficients of quadratics, when $k = 1$ and for periods, $l = 2,3, \ldots$, to store in the computer. The built-in data save much computing time because the algorithm does not need to repeatedly find the coefficients for next user.
    
The programs shall design two functions. One function is to obtain the congruence derivative sequence (CDS) (y). The other function $Z(l, q)$ is to yield the autocorrelation of the congruence derivative sequence with two parameters $l$ and $q$.
%getAutoCorr(y,q).m

A MATLAB function $rem(p,l)$ may be used for $p$ and $l$ are integer number if the programming is written by MATLAB, where the $rem(p,l)$ means the remainder after $p$ divided by $l$.

Given input original sequence $x$, the function CDS (y) yields the congruence derivative sequence of the sequence $x$ for a period $l$ in the computation.
      
Suppose we need to calculate the fractional period $l/k$, we then have the following algorithm. 

If $l$ is odd number, complete the loop for ordering to take out $(l-1)/2$ coefficients of quadratics, 
% MathType!MTEF!2!1!+-
% feaafiart1ev1aaatCvAUfeBSjuyZL2yd9gzLbvyNv2CaerbuLwBLn
% hiov2DGi1BTfMBaeXatLxBI9gBaerbd9wDYLwzYbItLDharqqtubsr
% 4rNCHbWexLMBbXgBd9gzLbvyNv2CaeHbl7mZLdGeaGqiVu0Je9sqqr
% pepC0xbbL8F4rqqrFfpeea0xe9Lq-Jc9vqaqpepm0xbba9pwe9Q8fs
% 0-yqaqpepae9pg0FirpepeKkFr0xfr-xfr-xb9adbaqaaeGaciGaai
% aabeqaamaabaabauaakeaaciGGJbGaai4BaiaacohacaGGOaGaaiik
% aiaaigdacaGGPaWaaSaaaeaacaaIYaGaeqiWdahabaGaamiBaaaaca
% GGPaGaaiilaiGacogacaGGVbGaai4CaiaacIcacaGGOaGaaGOmaiaa
% cMcadaWcaaqaaiaaikdacqaHapaCaeaacaWGSbaaaiaacMcacaGGSa
% GaeS47IWKaeS47IWKaaiilaiGacogacaGGVbGaai4CaiaacIcacaGG
% OaGaaiikaiaadYgacqGHsislcaaIXaGaaiykaiaac+cacaaIYaGaai
% ykamaalaaabaGaaGOmaiabec8aWbqaaiaadYgaaaGaaiykaaaa!66D9!
%\[
$\cos ((1)\frac{{2\pi }}
{l}),\cos ((2)\frac{{2\pi }}
{l}), \cdots  \cdots ,\cos (((l - 1)/2)\frac{{2\pi }}
{l})$,
%\] 
from the build-in data. From $t=1$ to $(l-1)/2$, do if $r = rem (tk,l) > (l-1)/2$  then take $\cos ((l - r)\frac{{2\pi }}{l})$ else take $\cos ((r)\frac{{2\pi }}{l})$. At the same time, the $2\cos (()\frac{{2\pi }}{l})$ value times the $Z(l, t)$ and to do summation corresponding to t order until  $t= (l-1)/2$. The computed summation added with $Z(l, 0)$ is the $FPS(l/k)$.

If $l$ is even number, doing the loop for ordering to take out $l/2$ coefficients of quadratics,  
% MathType!MTEF!2!1!+-
% feaafiart1ev1aaatCvAUfeBSjuyZL2yd9gzLbvyNv2CaerbuLwBLn
% hiov2DGi1BTfMBaeXatLxBI9gBaerbd9wDYLwzYbItLDharqqtubsr
% 4rNCHbWexLMBbXgBd9gzLbvyNv2CaeHbl7mZLdGeaGqiVu0Je9sqqr
% pepC0xbbL8F4rqqrFfpeea0xe9Lq-Jc9vqaqpepm0xbba9pwe9Q8fs
% 0-yqaqpepae9pg0FirpepeKkFr0xfr-xfr-xb9adbaqaaeGaciGaai
% aabeqaamaabaabauaakeaaciGGJbGaai4BaiaacohacaGGOaGaaiik
% aiaaigdacaGGPaWaaSaaaeaacaaIYaGaeqiWdahabaGaamiBaaaaca
% GGPaGaaiilaiGacogacaGGVbGaai4CaiaacIcacaGGOaGaaGOmaiaa
% cMcadaWcaaqaaiaaikdacqaHapaCaeaacaWGSbaaaiaacMcacaGGSa
% GaeS47IWKaeS47IWKaaiilaiGacogacaGGVbGaai4CaiaacIcacaGG
% OaGaamiBaiaac+cacaaIYaGaaiykamaalaaabaGaaGOmaiabec8aWb
% qaaiaadYgaaaGaaiykaiaacMcaaaa!6485!
%\[
$\cos ((1)\frac{{2\pi }}
{l}),\cos ((2)\frac{{2\pi }}
{l}), \cdots  \cdots ,\cos ((l/2)\frac{{2\pi }}
{l}))$
%\]
, from the build-in data, \textit{i.e.}, $t=1$ to $(l-1)/2$ do if $r = rem (tk,l) > (l)/2$ then take $\cos (l - r)\frac{{2\pi }}{l}$ else take $\cos((r)\frac{{2\pi }}{l})$. At the same time, the $2\cos(()\frac{{2\pi }}{l})$ value times the $Z(l, t)$ to do summation corresponding to $t$ order until $t= (l/2-1)$. $FPS (l/k)$ will be the computed summation added with two quantities, $2\cos ((l/2)\frac{{2k\pi }}{l})$ timing $\sum\limits_{t = 0}^{l/2 - 1} {y_t y_{t + l/2} }$, and $Z(l, 0)$. 
% PPS_protein/getFPSFromSignalByMatrix.m, % file:PS_protein/FPS_Performance_Test_01162016.m

To assess the effectiveness of proposed FPS algorithm in capturing fractional periodicities in digital signals, we compute the FPS spectrum of the simulated sinusoidal signal. The sinusoidal signal of length $N=300$, consists of $sine$ and $cosine$ signals with fractional periodicities 3.7 and 5.6, respectively. The signal is corrupted by white Gaussian noise (Equation (3.1)). 
 \begin{equation}
% MathType!MTEF!2!1!+-
% feaafiart1ev1aaatCvAUfeBSjuyZL2yd9gzLbvyNv2CaerbuLwBLn
% hiov2DGi1BTfMBaeXatLxBI9gBaerbd9wDYLwzYbItLDharqqtubsr
% 4rNCHbGeaGqiVCI8FfYJH8YrFfeuY-Hhbbf9v8qqaqFr0xc9pk0xbb
% a9q8WqFfeaY-biLkVcLq-JHqpepeea0-as0Fb9pgeaYRXxe9vr0-vr
% 0-vqpWqaaeaabiGaciaacaqabeaadaqaaqaaaOabaeqabaGaamiEai
% aacIcacaWGUbGaaiykaiabg2da9iGacohacaGGPbGaaiOBaiaacIca
% caaIYaGaeqiWda3aaSaaaeaacaWGUbaabaGaaG4maiaac6cacaaI3a
% aaaiabgUcaRmaalaaabaGaeqiWdahabaGaaGinaaaacaGGPaGaey4k
% aSIaci4yaiaac+gacaGGZbGaaiikaiaaikdacqaHapaCdaWcaaqaai
% aad6gaaeaacaaI1aGaaiOlaiaaiAdaaaGaey4kaSYaaSaaaeaacaaI
% ZaGaeqiWdahabaGaaGinaaaacaGGPaGaey4kaSIaamOBaiaad+gaca
% WGPbGaam4CaiaadwgaaeaacaWGUbGaeyypa0JaaGymaiaacYcacaaI
% YaGaaiilaiabl+UimjaacYcacaaIZaGaaGimaiaaicdaaaaa!659F!
%\[
\begin{array}{l}
x(n) = \sin (2\pi \frac{n}{{3.7}} + \frac{\pi }{4}) + \cos (2\pi \frac{n}{{5.6}} + \frac{{3\pi }}{4}) + noise \\ 
n = 1,2, \cdots ,300 \\ 
\end{array}
%\]
 \end{equation}
Since the periodic signal is buried by the white Gaussian noise, two periodicities 3.7 and 5.6 in the original signal are hidden by noise (Fig.1 (a)). The proposed FPS spectrum of the signal can clearly discover two pronounced peaks at positions 3.7 and 5.6 (Fig.1 (b)), corresponding to two periodicities 3.7 and 5.6 in the original signal. The power spectrum of this simulation sequence by the proposed method agrees with the DFT (Fig.1 (b) and (c)). This result demonstrates the effectiveness of the FPS algorithm.
%File:PPS_Protein/DFTPPS_FPS_01162016.m 
%(c) power spectrum of the sinusoidal signal by DFT.
\begin{figure}[tbp]
  	\centering
  	\subfloat[]{\includegraphics[width=3.30in]{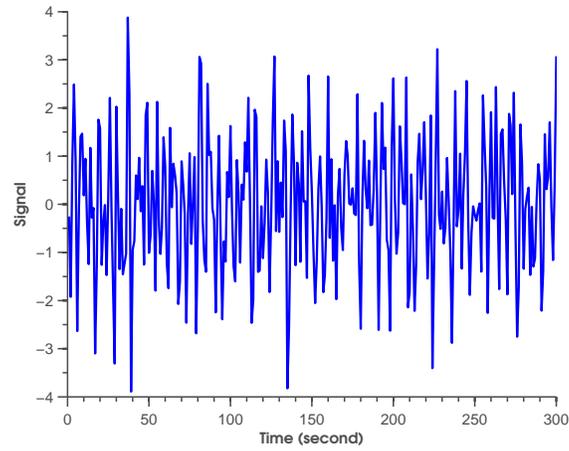}}\quad
  	\subfloat[]{\includegraphics[width=3.30in]{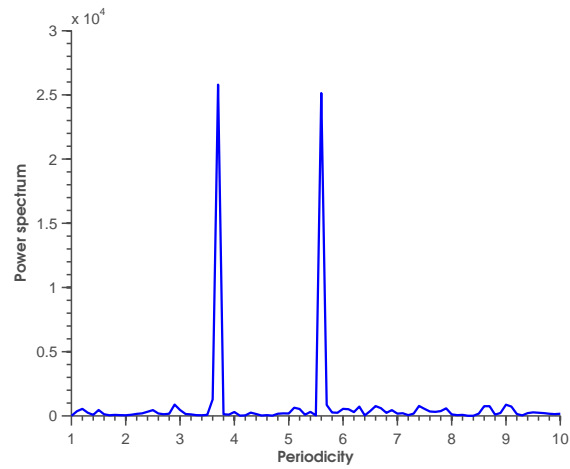}}\quad
    \subfloat[]{\includegraphics[width=3.30in]{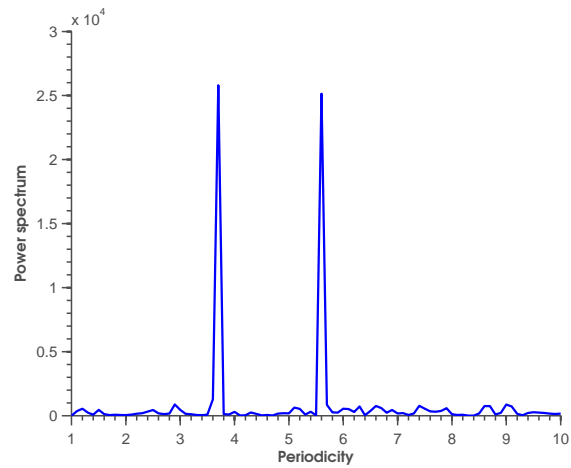}}\quad %DFT figure
  	\caption{Fractional period spectrum analysis of sinusoidal signal collapsed with white noise. (a) original sinusoidal signal. (b) Fractional period spectrum of the sinusoidal signal by the FPS algorithm. (c) Fractional period spectrum of the sinusoidal signal by DFT.}
  	\label{fig:sub1}
  \end{figure}
    
\subsection{Computational complexity of the FPS algorithm}
% file:PS_protein/FPS_Performance_Test_01162016.m
Recalling the process for computing $FPS(l/k)$ of a time series by using the new algorithm, the first step is to yield the congruence derivative sequence $y_j$, $j=0,1,.....,l-1$, from the original sequence $x$ of length $m$. This computing process only needs plus operation and do not use multiplication. The second step in the algorithm is to compute $z_{l,q}$ when $l$ is an even or odd number, the algorithm needs approximate $l/2(l+1)$ multiplications. Finally, using formulas (2.18, 2.19) needs $l/2$ multiplications to determine the value of $z_{l,q}cos((q)2k \pi /l)$. So the total computational quantity for the FPS algorithm involves $l/2(l+1) + l/2 = l^2/2 + l$ multiplications, \textit{i.e.}, the computational complexity is then $O(l^2+l)$. 
          
Using the traditional Fourier transform to compute $FPS(l/k)$, we need to calculate the Fourier transform at period $l/k$ by the formulas 2.3 and 2.4. From (2.3), we must calculate m exponents, $ - i2\pi jk/l,j = 0,1,2, \ldots  \ldots ,m - 1$, and call exp function m times to figure out $e^{ - i2\pi jk/l}, j = 0,1,2, \ldots  \ldots ,m - 1$, if using MATLAB. It is known that to compute function $e^{ - i2\pi jk/l}$ costs much time. Then to figure out $\sum\limits_{j = 0}^{m - 1} {e^{ - i2\pi jk/l} } $ needs $2m$ multiplications and the time for calling m exp functions. So, according to formula (2.4) calculating out the Fourier spectrum at period $l/k$ needs $2m +1$ multiplications, \textit{i.e.}, $O(2m+1)$. If $FPS(l/k)$ is computed for whole fractional periods, traditional Fourier transform needs $l/2O(2m+1)$ operations, \textit{i.e.} $O(lm+l)$.

In summary, the analysis of computational complexity indicates that the FPS algorithm is much faster than traditional Fourier transform for computing fractional periodic spectrum because of $O(l^2  + l) <  < O(lm + l)$.
  
\section{Conclusions}
A fast algorithm for computing the FPS of biological sequences is introduced in this paper. First, the mathematical deduction and proof are presented. Then, based on the mathematical theory, the technique of built-in data is suggested and some functions are introduced, so that a simple program can be designed. The analysis of computational complicity shows that this fast algorithm is much faster than traditional Fourier transform.
  
The fast algorithm will be also helpful in computing the FPS for any time series from the science and technology field other than bioinformatics.

In addition, the congruence derivative sequence of original sequences can be used to yield Ramanujan transform easily \citep{hua2014discrete,yin2014novel}, which do not have any computing cost. The computation for identifying periods of the biological sequence or other time series would be done by Fourier transform and Ramanujan transform simultaneously.

\newpage
%\section*{References}
%% Authors are advised to submit their bibtex database files. They are
%% requested to list a bibtex style file in the manuscript if they do
%% not want to use elsarticle-harv.bst.

\bibliographystyle{elsarticle-harv}
\bibliography{../References/myRefs}
%\bibliography{myRefs}
%% References without bibTeX database:
% \begin{thebibliography}{00}
%% \bibitem must have one of the following forms:
%%   \bibitem[Jones et al.(1990)]{key}...
%%   \bibitem[Jones et al.(1990)Jones, Baker, and Williams]{key}...
%%   \bibitem[Jones et al., 1990]{key}...
%%   \bibitem[\protect\citepauthoryear{Jones, Baker, and Williams}{Jones
%%       et al.}{1990}]{key}...
%%   \bibitem[\protect\citepauthoryear{Jones et al.}{1990}]{key}...
%%   \bibitem[\protect\astroncitep{Jones et al.}{1990}]{key}...
%%   \bibitem[\protect\citepname{Jones et al., }1990]{key}...
%%   \harvarditem[Jones et al.]{Jones, Baker, and Williams}{1990}{key}...
%%

% \bibitem[ ()]{}
% \end{thebibliography}
\end{document}